\numberwithin{equation}{section}
\theoremstyle{plain}
\newtheorem{theorem}{Theorem}[section]
\newtheorem{corollary}[theorem]{Corollary}
\newtheorem{lemma}[theorem]{Lemma}
\newtheorem{example}[theorem]{Example}
\newtheorem{proposition}[theorem]{Proposition}
\theoremstyle{definition}
\newtheorem{definition}[theorem]{Definition}
\theoremstyle{remark}
\newtheorem{remark}[theorem]{Remark}
\newcommand\bdf{\begin{definition}}
\newcommand\bpr{\begin{proposition}}
\newcommand\brk{\begin{remark}}
\newcommand\blm{\begin{lemma}}
\newcommand\bexe{\begin{exercise}}
\newcommand\bexa{\begin{example}}
\newcommand\beqn{\begin{eqnarray*}}
\newcommand\edf{\end{definition}}
\newcommand\epr{\end{proposition}}
\newcommand\erk{\end{remark}}
\newcommand\elm{\end{lemma}}
\newcommand\eexe{\end{exercise}}
\newcommand\eexa{\end{example}}
\newcommand\eeqn{\end{eqnarray*}}
\newcommand{\Opt}{\rm OptGeo}
\newcommand{\geo}{\rm Geo}
\newcommand{\pr}{\mathcal{P}}
\newcommand{\lmt}[2]{\mathop{\lim}_{{#1} \rightarrow {#2}} }
\newcommand{\lip}[1]{{\mathrm{lip}}({#1})}
\newcommand{\lmti}[2]{\mathop{\underline{\lim}}_{{#1} \rightarrow {#2}} }
\newcommand{\Ric}{{\mathrm{Ricci}}}
\newcommand{\mm}{\mathfrak m}
\newcommand{\ms}{(X,\dist,\mm)}
\newcommand{\CD}{{\rm CD}}
\newcommand{\cdkn}{\mathrm {CD}(K, N)}
\newcommand{\rcdkn}{\mathrm {RCD}(K, N)}
\newcommand{\ent}[1]{\mathrm {Ent}_{#1}}
\newcommand{\De}{\mathbf{D}}
\newcommand{\K}{\mathrm{K}}
\newcommand{\N}{\mathbb{N}}
\newcommand{\R}{\mathbb{R}}
\newcommand{\Rem}{\mathbf{R}}
\newcommand{\supp}{\mathop{\rm supp}\nolimits}   
\newcommand{\Lip}{\mathop{\rm Lip}\nolimits}
\newcommand{\dist}{\mathsf{d}}
\renewcommand{\d}{{\mathrm d}}
\newcommand{\D}{{\mathrm D}}
\newcommand{\restr}[1]{\lower3pt\hbox{$|_{#1}$}}
\newcommand{\nchi}{{\raise.3ex\hbox{$\chi$}}}
\newcommand{\weakto}{\rightharpoonup}
\title{{\bf ABP estimate on metric measure spaces via optimal transport}}
\begin{document}
\author{Bang-Xian Han\thanks{School of Mathematics, Shandong University.  Email: hanbx@sdu.edu.cn. }}

\date{\today} 
\maketitle

\begin{abstract}
By using  optimal transport theory,  we establish a sharp Alexandroff--Bakelman--Pucci (ABP) type estimate  on  metric measure spaces with  synthetic Riemannian Ricci curvature lower bounds, and prove some geometric  and functional inequalities including a functional  ABP estimate.    Our result not only extends the border of  ABP estimate,  but also provides an  effective substitution of Jacobi fields computation  in the non-smooth framework, which   has potential applications to many problems in non-smooth geometric analysis. 
\end{abstract}

\textbf{Keywords}: ABP estimate,  curvature-dimension condition,  maximal principle,  metric measure space, Ricci curvature, optimal transport\\
\tableofcontents

\section{Introduction}
During the sixties, Alexandroff, Bakelman, and Pucci introduced a method, which we call  ABP method today, to prove  ABP estimate.  It plays a key role in the proof of the Krylov\textendash Safonov Harnack inequality and the regularity theory for fully non-linear elliptic equations,  see \cite{Cabrelecture, CabreCaffarelli} for an introduction to this important theory and  a beautiful proof for the  isoperimetric inequality for smooth domains in  the Euclidean space. 

Formally speaking,  the goal of the Alexandroff\textendash Bakelman\textendash Pucci (ABP) estimate  is to \emph{estimate the size of the contact sets}, which  naturally has a non-linear version. 
The study of the ABP estimate on Riemannian manifolds was initiated by Cabr\'e  in \cite{Cabre-ABP} where  he considered the square of distance functions (or concave paraboloid) instead of affine functions as the touching functions, and obtained the Harnack inequalities for non-divergent elliptic equations on Riemannian manifolds with non-negative sectional curvature. Based on Cabr\'e's idea and a work of Savin \cite{SavinCPDE},  Wang and Zhang \cite{WZ-ABP}  introduced a notion of contact set instead of the convex envelope, and established an explicit ABP type estimate on Riemannian manifolds with Ricci curvature bounded from below.  More recently,  Xia and Zhang \cite{XZ-ABP}  established an anisotropic version of the ABP estimate, and proved several geometric inequalities using this estimate. 

In the recent achievement of Gigli \cite{Gigli2023On} and Mondino--Semola\cite{mondino2022lipschitz} on the regularity theory for harmonic maps from $\rcdkn$ to ${\rm CAT}(0)$ spaces,  a non-sharp version of this type of estimate plays important role. Given also its importance in both elliptic equations and geometry,  we therefore believe that a sharp version of ABP estimate on more general metric measure spaces has its own interest.

In the present work, we continue the study of ABP type estimate on non-smooth metric measure spaces. In particular, we shall establish a sharp version of the ABP estimate on metric measure spaces with Ricci curvature bounded from below.   

\medskip

 As pointed by Mondino and Semola in \cite[\S 4]{mondino2022lipschitz}, there are
 \begin{quote}
\emph{`a couple of deep difficulties to repeat  strategy of Wang\textendash Zhang \cite{WZ-ABP} in the non-smooth setting'}. 
 \end{quote}
Precisely,  the difficulties are:
\begin{itemize}
\item  In the non-smooth setting, Jacobi fields computations are not available and typically one works with Wasserstein geodesics  in order to take advantage of optimal transport tools.
\item  An initial value problem (an ODE which play a key role in the argument)  has no clear counterpart in the non-smooth setting.
\end{itemize}

To overcome these difficulties, we will make full use of the powerful  optimal transport techniques developed in the last decade.
Optimal transport, or called optimal mass transportation,  aims to evaluate the difference between  two probability measures.
 For  $p\geq 1$, $\mathcal P_p(X)$ denotes the set of probability measures on a metric space $(X, \dist)$ with finite $p$-moment, i.e. $\mu\in \mathcal P_p(X)$ if $\mu(X)=1$ and $\int \dist^p(x, x_0)\,\d \mu(x)<\infty$ for some 
 (and thus every) $x_0\in X$. The $L^p$-transport distance, or called $p$-Wasserstein distance $W_p$,  is defined by
 \[
 W_p^p(\mu, \nu):=\inf_\Pi \int \dist^p(x, y)\,\d \Pi(x, y)
 \]
 where the infimum is taken among all transport plans $\Pi$ with marginals $\mu, \nu\in \mathcal P_p(X)$.
 
Take $p=2$ for example, it is known that $W_2$ can be computed by  duality 
\[
 \frac 12  W_2^2(\mu, \nu)=\mathop{\sup}_{(\varphi, \phi)} \, \left\{\int \varphi(x)\,\d \mu(x)+\int \phi(y)\,\d \nu(y)\right\}
\]
where the supremum is taken over all pairs of integrable functions $(\varphi, \phi)$ satisfying $$\varphi(x)+\phi(y)\leq \frac {\dist^2(x, y)}2~~~~~\forall x, y\in X.$$
Equivalently,  we can consider all pairs of functions $(\varphi, \phi)$ with $\varphi \in \Lip(\supp \mu, \dist)$ and
\[
\phi(y)=\varphi^c(y):=\mathop{\inf}_{x\in \supp \mu} \left(\frac{\dist^2(x, y)}2-\varphi(x)\right)~~~~~\forall y\in \supp \nu.
\]
Note that $\phi$ defined as above is locally Lipschitz on $\supp \nu$. By optimal transport theory,
there is a locally Lipschitz function $\varphi$, called Kantorovich potential, such that 
\[
\frac 12 W^2_2(\mu, \nu)=\int \varphi(x)\,\d \mu(x)+\int \varphi^c(y)\,\d \nu(y).
\]

Consider the following problem, which can be seen as an {\bf inverse problem} of the optimal transport problem:
\begin{quote}
\emph{Given a function $\varphi$, can we find a  pair of  probability measures,  with maximal supports,  so that  $\varphi$ is a Kantorovich potential associated with the corresponding optimal transport problem?} 
\end{quote}
We will see that this inverse problem, together with the curvature-dimension condition,  is the  essence of the ABP estimate. In particular, we will see that \emph{different optimal transport problems correspond to different contact sets}.

\medskip

For  $L^2$-optimal transport problem,  following Cabr\'e   \cite{Cabre-ABP} and Wang--Zhang \cite{WZ-ABP},  we  need to consider the following $2$-contact set.

\bdf[Contact set $\Rem_2$]\label{def:transportset2}
Let  $\Omega$ be a bounded open subset of $X$ and $u$ be a  continuous function on $X$.  For a given $t>0$ and a compact set $\De\subset X$, we define the $2$-contact set $ \Rem_2(\De, \Omega,  u, t)$ associated to  $u$ of opening $t$ with vertex set $\De$ by
 \begin{equation*}
 \Rem_2(\De, \Omega, u, t):=\left\{ x\in \overline \Omega: \exists y\in \De~\text {s.t.} ~\inf_{\overline \Omega}\Big(u+\frac { \dist^2_y}{2t}\Big)=u(x)+\frac { \dist^2(x, y)}{2t}\right\}
 \end{equation*}
 where  $\dist_y(\cdot):=\dist(\cdot, y)$ the distance function to a point $y\in X$.
\edf

\medskip

In $L^1$-optimal transport, the mass will be transported along the trajectories of the gradient of a Kantorovich potential $\phi$, each pair of  points $x, y$ on the same trajectory satisfies $\dist(x, y)=|\phi(x)-\phi(y)|$. 
So we  need to  fix the distance between $x, y$ and consider   the $1$-contact set $\Rem_1$ in a different way.

\bdf[Contact sets $\Rem^*_1$ and $\Rem_1$]\label{def:transportset1}
Let $\Omega$ be a bounded open subset of $X$. For a given continuous function $u$,  a compact set $\De\subset X$ and $t\geq 0$,  the contact set $ \Rem_1(\De, \Omega,  u, t)$ associated with  $u$ of opening $t$,  with vertex set $\De$, is defined by
 \begin{equation*}
 \Rem_1(\De, \Omega,  u, t):=\left\{ x\in \overline \Omega: \exists y\in \De~\text {s.t.}  ~\dist(x, y)=t, \inf_{\overline \Omega}\big(u+\dist_y\big)=u(x)+\dist(x, y)\right\}.
 \end{equation*}
 We also denote 
 \begin{eqnarray*}
  \Rem^*_1(\De, \Omega,  u)=\left\{ x\in \overline \Omega: \exists y\in \De~~\text {s.t.} ~~\inf_{\overline \Omega}\big(u+\dist_y\big)=u(x)+\dist(x, y)\right\}.
 \end{eqnarray*}
\edf

\medskip

\begin{remark}
The bold letters $\Rem$ and $\De$ come from French words \emph{remblais} and \emph{d\'eblais} respectively,   which appear    in the title of the well-known article  \cite[M\'emoire sur la th\'eorie des {\bf D}\'eblais et de {\bf R}emblais]{monge1781} published in  1781.  This article is the starting point of the optimal transport theory,  written by    French mathematician Gaspard Monge (1746-1818).
\end{remark}
\medskip

Using synthetic curvature-dimension theory initiated by Lott--Villani \cite{Lott-Villani09} and Sturm \cite{S-O1, S-O2}, and  non-smooth  calculus tools developed by Gigli \cite{G-O}, in Subsection \ref{l2} we  extend the classical ABP estimate, and Wang--Zhang's estimate \cite{WZ-ABP} to metric measure spaces with Riemannian Ricci curvatures bounded from below. This improves the recent estimates obtained by Gigli \cite[Theorem 5.9 ]{Gigli2023On} and Mondino--Semola \cite[Theorem 4.3]{mondino2022lipschitz}, to a sharp version with explicit constants.

\medskip

\begin{theorem}[ABP estimate]\label{th2}
Let $\ms$ be an $\rcdkn$ metric measure space with $K\in \R$ and $N\in (1, +\infty)$.  Let $\Omega\subset X$ be a bounded open set with $\mm(\partial \Omega)=0$, $\De \subset X$ be a compact set.  Assume there is   a  continuous function  $u\in {\rm D}(\Delta, \Omega)$ with $\Delta u\in L^\infty$ and $t>0$ such that
\[
 \Rem_2(\De, \Omega,  u, t) \subset \Omega
\]
or 
\[
  \Rem_1(\De, \Omega,  u, t) \subset \Omega,
\]
\[
\forall y\in \De,  ~\exists x\in  \Rem_1(\De, \Omega,  u, t), ~\dist(x, y)=t, ~\inf_{\overline \Omega}\big(u+\dist_y\big)=u(x)+\dist(x, y)
\]
Then  for $i=1, 2$, we have
\[
\mm(\De)\leq
\begin{cases}
\displaystyle \mm(\Rem_i)\left(c_{K/N}(\Theta)+\frac{ts_{K/N}(\Theta)}{N\Theta} \|(\Delta u)^+\|_{L^\infty(\overline \Omega)}\right)^N & \textrm{if}\ K<0, \crcr
\displaystyle \mm(\Rem_i)\left(1+\frac tN \|(\Delta u)^+\|_{L^\infty(\overline \Omega)}\right)^N & \textrm{if}\ K =0,  \crcr
\displaystyle   \mm(\Rem_i)\left(c_{K/N}(\Phi)+\frac{ts_{K/N}(\Phi)}{N\Phi}  \|(\Delta u)^+\|_{L^\infty(\overline \Omega)}\right)^N & \textrm{if}\ K>0.
\end{cases}
\]
where $(\Delta u)^+$ denotes the positive part of $\Delta u$,  $\Theta:=\sup_{(x, y)\in \De \times \Omega} {\dist(x, y)}$ and $\Phi:=\inf_{(x, y)\in \De \times \Omega} {\dist(x, y)}$,  $c_{K/N}$ and  $s_{K/N}$ are distortion coefficients.

In particular, if $K=0$, we have
\[
\mm(\De ) \leq  \mm(\Rem_i)\exp( t \|(\Delta u)^+\|_{L^\infty}),~~i=1,2.
\]

\end{theorem}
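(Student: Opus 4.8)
The plan is to realize the function $u$ as (part of) a Kantorovich potential for a suitably chosen optimal transport problem, and then to read off the measure estimate from the Jacobian/monotonicity properties guaranteed by the curvature-dimension condition. Concretely, consider first the case $i=2$. For each $y\in\De$ the point(s) $x$ where the infimum $\inf_{\overline\Omega}(u+\dist_y^2/(2t))$ is attained lie, by hypothesis, in the interior $\Omega$; at such an interior minimum the function $x\mapsto u(x)+\dist^2(x,y)/(2t)$ has a ``lower support'' behaviour, so one expects $y$ to be obtained from $x$ by following the Wasserstein geodesic of length $t$ driven by the Kantorovich potential $-u$ (up to rescaling). The idea is therefore to build a map $T\colon \Rem_2\to\De$ sending $x$ to an admissible $y$, show that $T$ pushes (a piece of) $\mm\restr{\Rem_2}$ forward onto (a piece of) $\mm\restr{\De}$ after appropriate interpolation, and estimate the Jacobian of this map along the geodesic using the $\cdkn$ inequality in the form of the distortion coefficients $\sigma^{(t)}_{K/N}$ / the functions $c_{K/N},s_{K/N}$.

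The key steps, in order, would be: (1) Fix the transport: define $\mu$ to be the normalized restriction of $\mm$ to a measurable subset of $\Rem_2$ (or to $\Rem_2$ itself, using $\mm(\partial\Omega)=0$ to avoid boundary issues), and let $\nu=T_\#\mu$ be the induced measure on $\De$; verify that $-u$ (suitably scaled by $t$) is a Kantorovich potential between $\mu$ and $\nu$ and that the associated Wasserstein geodesic $(\mu_s)_{s\in[0,1]}$ exists and is the unique $W_2$-geodesic, with $\mu_0=\mu$, $\mu_1=\nu$. (2) Apply the $\cdkn$ convexity inequality for the Rényi/entropy functional along $(\mu_s)$: this yields, at the level of densities $\rho_s$ of $\mu_s$ w.r.t.\ $\mm$, the pointwise Jacobian bound involving $\sigma^{(1-s)}_{K/N}$ and $\sigma^{(s)}_{K/N}$ along each transport geodesic. (3) Encode the Laplacian bound: since $u\in {\rm D}(\Delta,\Omega)$ with $\Delta u\in L^\infty$ and the contact condition forces $u$ to be touched from below by the ``paraboloids'' $-\dist_y^2/(2t)+\mathrm{const}$, the second-order information $\Delta u\le\|(\Delta u)^+\|_{L^\infty}$ translates, via the first variation of the transport cost, into a bound on the ``initial expansion rate'' of the geodesic; combining with the distortion estimate from (2) gives the $\bigl(c_{K/N}(\Theta)+\tfrac{t s_{K/N}(\Theta)}{N\Theta}\|(\Delta u)^+\|_{L^\infty}\bigr)^N$ factor. (4) Integrate: since $T$ need not be injective but is at least ``essentially'' measure-compatible in the right direction, one gets $\mm(\De)=\nu(\De)\cdot(\text{const}) \le \int_{\Rem_2}(\text{Jacobian bound})\,\dm \le \mm(\Rem_2)\cdot(\text{const})^N$, i.e.\ the claimed inequality. (5) Repeat for $i=1$ with the $L^1$ transport: the last displayed hypothesis in the theorem statement (existence, for every $y\in\De$, of an $x\in\Rem_1$ with $\dist(x,y)=t$ realizing the infimum of $u+\dist_y$) is exactly what is needed to set up the $L^1$ transport along the gradient flow of the Kantorovich potential, fixing the transport distance to be $t$, and the $\mathrm{MCP}/\cdkn$ distortion then produces the same coefficients. (6) Finally, in the flat case $K=0$, observe $c_0\equiv1$, $s_0(\Theta)/\Theta\equiv1$, so the bound is $\mm(\Rem_i)(1+\tfrac tN\|(\Delta u)^+\|_{L^\infty})^N$, and letting the elementary inequality $(1+a/N)^N\le e^{a}$ do the rest yields $\mm(\De)\le\mm(\Rem_i)\exp(t\|(\Delta u)^+\|_{L^\infty})$.

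The main obstacle I expect is step (3), making rigorous the passage from the \emph{pointwise} contact/touching condition and the $L^\infty$ bound on $\Delta u$ to a \emph{quantitative Jacobian estimate} along the Wasserstein geodesic: in the smooth setting this is precisely the Jacobi-field computation that Mondino--Semola flag as unavailable, so one has to substitute for it by a density-comparison argument. The natural route is to use that along a $\cdkn$ geodesic the density satisfies $\rho_s^{-1/N}(\gamma_s)\ge \sigma^{(1-s)}_{K/N}(|\dot\gamma|)\rho_0^{-1/N}(\gamma_0)+\sigma^{(s)}_{K/N}(|\dot\gamma|)\rho_1^{-1/N}(\gamma_1)$, differentiate (or take difference quotients) in $s$ at $s=0$, and identify the resulting ``acceleration'' term with $\Delta u$ via the continuity equation and the pointwise-a.e.\ identity $\Delta\varphi=\mathrm{div}(\nabla\varphi)$ for Kantorovich potentials on $\rcdkn$ spaces (here one needs the measure-contraction / localization machinery, or the improved second-order calculus of Gigli, to know $u$ has the requisite regularity on $\Rem_i$). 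A secondary technical point is the non-injectivity of $x\mapsto y$ and the possibility that the contact set has $\mm$-measure zero (in which case the inequality is trivial), which forces a careful restriction to a set of positive measure on which the transport map is well-defined $\mm$-a.e.; and a third is handling the $L^1$ case, where one works with the $1$-Wasserstein disintegration into transport rays and applies the one-dimensional $\cdkn$ (equivalently $\mathrm{MCP}(K,N)$) estimate ray-by-ray, then integrates over the quotient — the coefficients $c_{K/N},s_{K/N}$ being exactly the $(N-1)$-dimensional model-space volume-distortion factors along a ray of length $t$.
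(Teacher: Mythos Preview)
Your overall architecture is right and matches the paper's: build an $L^2$ optimal transport from $\Rem_i$ to $\De$ with $-tu$ acting as a Kantorovich potential, then convert the $\cdkn$ convexity of entropy along the geodesic into the measure estimate. But two concrete ingredients are missing from your sketch, and they are exactly where the work lies.

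First, step~(3). You propose to ``identify the acceleration term with $\Delta u$ via the continuity equation''. The paper does something sharper and more robust: it shows that $-tu$ admits a $c$-concave upper representative $\varphi$ (namely $\varphi=(tv)^c$ with $v(y)=\inf_{\overline\Omega}(u+\dist_y^2/(2t))$) satisfying $\varphi=-tu$ on $\Rem_i$ and $\varphi\ge -tu$ on $\Omega$. This touching property, together with a heat-flow argument (comparing $P_s\varphi-\varphi$ with $-t(P_su-u)$ on $\Rem_i$ and using that $({\bf\Delta}\varphi)^+$ is absolutely continuous by Laplacian comparison), yields the distributional bound ${\bf\Delta}\varphi\llcorner_{\Rem_i}\ge -t\,\Delta u\,\mm\llcorner_{\Rem_i}$. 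This is the substitute for the Jacobi computation, and without the envelope-plus-heat-flow step your identification is not justified: $-tu$ is not itself $c$-concave, and a ``continuity equation'' argument would at best give you $\Delta\varphi$, not $\Delta u$. Once this comparison is in hand, the paper feeds it into a functional estimate (their Theorem~3.7): take $\nu=\mm(\De)^{-1}\mm\llcorner_{\De}$, pull it back via $\varphi^c$ to $\mu\ll\mm$ on $\Rem_i$, differentiate $U_N=\exp(-\ent\mm/N)$ along the Wasserstein geodesic using Gigli's first-order calculus, integrate by parts to produce $\int\rho\,\d{\bf\Delta}\varphi$, and approximate $\rho$ by Lipschitz densities via heat flow. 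Jensen then gives $U_N(\mu)\le\mm(\Rem_i)^{1/N}$, and the monotonicity of $s_{K/N}(\theta)/\theta$ produces the $\Theta$/$\Phi$ dichotomy.

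Second, the $i=1$ case. You propose $L^1$ transport and ray disintegration; the paper does \emph{not} do this. Instead it observes that if $x\in\Rem_1$ with $\dist(x,y)=t$ and $u(x)+\dist(x,y)=\inf_{\overline\Omega}(u+\dist_y)$, then by Cauchy's inequality $u(z)+\dist^2(z,y)/(2t)\ge u(z)+\dist(z,y)-t/2\ge u^\dist(y)-t/2=u(x)+\dist^2(x,y)/(2t)$ for every $z\in\overline\Omega$, so $x$ also realizes the quadratic infimum. Hence $-tu$ again has a $c$-concave envelope on $\Rem_1$ and the \emph{same} $L^2$ machinery applies verbatim. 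Your needle-decomposition route would presumably also work but is a genuinely different and heavier argument.
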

\bigskip

This theorem  will be proved in   general (possibly non-smooth)
 metric measure spaces $(X,\dist,\mm)$,  satisfying the synthetic condition $\rcdkn$ of Lott--Sturm--Villani \cite{Lott-Villani09,S-O1, S-O2}.  Here  $K\in \R $ denotes Ricci curvature lower bound  and $N\in (1, +\infty)$ denotes dimension upper bound.

\begin{example}[Notable examples of spaces fitting our framework]
The class of ${\rm RCD}(K,N)$ spaces includes the following remarkable subclasses:
\begin{itemize}
\item Measured Gromov--Hausdorff limits of  $N$-dimensional Riemannian manifolds  with  $\mathrm {Ricci} \geq K$,  see \cite{AGS-M}. 
\item $N$-dimensional Alexandrov spaces with curvature bounded from below by $K$, see  \cite{ZhangZhu10, Petrunin11}. 
\end{itemize}
We refer the readers to Villani's Bourbaki seminar \cite{VillaniJapan} and Ambrosio's  ICM-Proceeding \cite{AmbrosioICM} for  more examples and bibliography.
\end{example}

\bigskip
\noindent{\bf Final remarks:}
\begin{itemize}
\item  With the help of  a non-smooth version of Otto's calculus developed by Gigli in \cite{G-O},  we get a non-smooth version of  ABP type estimate without any `Jacobi fields computation'.   This improves an estimate obtained by Gigli   \cite{Gigli2023On} and Mondino--Semola\cite{mondino2022lipschitz}.
\item In Proposition \ref{lemmaEVI}, we prove a functional  version of  ABP estimate, even without the `essentially non-branching' condition,  which seems new even on $\R^n$. 

\item Our results are essentially dimension-dependent, see \cite{Gigli2023On} for a non-sharp, but  dimension-free version.
\end{itemize}
\bigskip
\noindent{\bf Organization of the paper:}
The paper is organized as follows: In Section \ref{pre}, we collect some preliminaries about the theory of metric measure space, optimal transport and curvature-dimension condition. Section \ref{main} is devoted to proving the main theorems and their applications.  

\bigskip

\noindent \textbf{Declaration:}
{The  author declares that there is no conflict of interest and the manuscript is purely theoretical  which has no associated data.}

\medskip

\noindent \textbf{Acknowledgement}:   This  work is supported by  the Young Scientist Programs  of the Ministry of Science \& Technology of China (No. 2021YFA1000900  and 2021YFA1002200),  and NSFC grant (No.12201596).

\section{Preliminaries}\label{pre}
In this paper,  $(X, \dist)$ represents a complete, proper and separable geodesic space endowed with a  positive Radon measure $\mm$ with full support.  The triple   $\ms$ is called a metric measure space.

\subsection{Optimal transport and curvature-dimension condition}\label{SS:CDDef}

\subsubsection*{Metric space and Wasserstein space}
We denote by 
$$
\geo(X) : = \Big\{ \gamma \in C([0,1], X):  \dist(\gamma_{s},\gamma_{t}) = |s-t| \dist(\gamma_{0},\gamma_{1}), \text{ for every } s,t \in [0,1] \Big\}
$$
the space of constant speed geodesics. The metric space $(X,\dist)$ is assumed to be {geodesic}, this means,  for each $x,y \in X$ 
there is $\gamma \in \geo(X)$ so that $\gamma_{0} =x, \gamma_{1} = y$.

\medskip

We denote with  $\mathcal{P}(X)$ the  space of all Borel probability measures over $X$ and with  $\mathcal{P}_{2}(X)$ the space of probability measures with finite second moment.
The $2$-Wasserstein distance  $W_{2}$ is defined as follows:  for $\mu_0,\mu_1 \in \mathcal{P}_{2}(X)$,  set
\begin{equation}\label{eq:W2def}
  W_2^2(\mu_0,\mu_1) := \inf_{ \Pi} \int_{X\times X} \dist^2(x,y) \, \d\Pi(x, y),
\end{equation}
where the infimum is taken over all $\Pi \in \mathcal{P}(X \times X)$ with $\mu_0$ and $\mu_1$ as the first and the second marginal. The space of all measures achieving the minimum in \eqref{eq:W2def} will be denoted by ${\rm Opt}(\mu_0, \mu_1)$ and any $\Pi\in {\rm Opt}(\mu_0, \mu_1)$ will be called \emph{optimal transport plan}.

\medskip

 For any $t\in [0,1]$,  let ${\rm e}_{t}$ denote the evaluation map: 
$$
  {\rm e}_{t} : \geo(X) \to X, \qquad {\rm e}_{t}(\gamma) : = \gamma_{t}.
$$
The space  $(\mathcal{P}_2(X), W_2)$ is geodesic and any geodesic $(\mu_t)_{t \in [0,1]}$ in $(\mathcal{P}_2(X), W_2)$  can be lifted to a measure $\pi \in {\mathcal {P}}(\geo(X))$, 
so that $({\rm e}_t)_\sharp \, \pi = \mu_t$ for all $t \in [0,1]$. 
Given $\mu_{0},\mu_{1} \in \mathcal{P}_{2}(X)$, we denote by 
$\Opt(\mu_{0},\mu_{1})$ the space of all $\pi \in \mathcal{P}(\geo(X))$ for which $({\rm e}_0,{\rm e}_1)_\sharp\, \pi \in {\rm Opt}(\mu_0, \mu_1)$. Such a $\pi$ will be called \emph{dynamical optimal transport plan}. The set  $\Opt(\mu_{0},\mu_{1})$ is non-empty for any $\mu_0,\mu_1\in \mathcal{P}_2(X)$.

\medskip

\subsubsection*{Fundamental theorem of optimal transport}

It is known that $W_2$ can be computed with the following Kantorovich duality formula
\[
 \frac 12  W_2^2(\mu, \nu)=\mathop{\sup}_{(\varphi, \varphi^c)}  \left\{\int \varphi(x)\,\d \mu(x)+\int \varphi^c(y)\,\d \nu(y)\right\}
\]
where the supremum is taken over all pairs of Lipschitz functions $\varphi$ and its $c$-transform  
\[
\varphi^c(y):=\mathop{\inf}_{x\in X} \frac{\dist^2(x, y)}2-\varphi(x)~~~\forall y\in X.
\]

A function $\phi: X \mapsto \R \cup \{-\infty\}$ is called $c$-concave provided it is not identically $-\infty$ and it holds $\phi = \psi^c$ for some $\psi$.
By optimal transport theory,
there is a $c$-concave function $\varphi$, called \emph{Kantorovich potential}, such that 
\[
\frac 12 W^2_2(\mu, \nu)=\int \varphi(x)\,\d \mu(x)+\int \varphi^c(y)\,\d \nu(y).
\]

\begin{definition}[$c$-superdifferential]
Let $\varphi$ be a continuous function. The $c$-superdifferential $\partial^c \varphi \subset X \times X$ is defined as
\[
\partial^c \varphi:=\left\{ (x, y)\in X\times X: \varphi(x)+\varphi^c(y)=\frac{\dist^2(x, y)}2\right\}.
\]
The $c$-superdifferential $\partial^c \varphi(x)$ at $x\in X$ is the set of $y\in X$ such that $(x, y)\in \partial^c \varphi$.

\end{definition}

We have the following important theorem about the optimality of the transport plan and $c$-concave functions, see \cite[Theorem 1.13]{AG-U} for a proof.

\begin{theorem}[Fundamental theorem of optimal transport]\label{th:ftot}
Let $\Pi \in \mathcal{P}(X \times X)$ be a probability measure with $\mu$ and $\nu$ as the first and the second marginal,  such that $\int \dist^2(x,y)\,\d \Pi<+\infty$. Then the following are equivalent:
\begin{itemize}
\item  [(a)] The plan $\Pi$ is optimal, i.e. it realizes the minimum in the Kantorovich problem \eqref{eq:W2def}.
\item [(b)] There exists a $c$-concave function $\varphi$ such that $\max\{\varphi,0\}\in L^1(\mu)$ and $\supp(\Pi)\subset \partial^c \varphi$.
\end{itemize}
\end{theorem}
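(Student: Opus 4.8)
\textbf{Proof strategy for Theorem \ref{th:ftot}.}

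The plan is to establish the two implications (a) $\Rightarrow$ (b) and (b) $\Rightarrow$ (a) separately, the second being elementary and the first being the substantial part.

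\emph{The easy direction (b) $\Rightarrow$ (a).} Suppose $\varphi$ is $c$-concave with $\max\{\varphi,0\}\in L^1(\mu)$ and $\supp(\Pi)\subset\partial^c\varphi$. Writing $\varphi=\psi^c$ for some $\psi$, note that $\varphi^{c}=\psi^{cc}\geq\psi$, hence $\max\{\varphi^c,0\}\in L^1(\nu)$ is forced once one checks integrability (one first reduces to the case where both $\varphi,\varphi^c$ are genuinely integrable, replacing $\varphi$ by $\varphi+\lambda$ if needed, or invoking that $\int\dist^2\,\d\Pi<\infty$ together with the pointwise bound $\varphi(x)+\varphi^c(y)\le\frac12\dist^2(x,y)$ to get the negative parts under control). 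Since $\supp(\Pi)\subset\partial^c\varphi$ means $\varphi(x)+\varphi^c(y)=\frac12\dist^2(x,y)$ for $\Pi$-a.e.\ $(x,y)$, integrating gives
\[
\int\tfrac12\dist^2(x,y)\,\d\Pi=\int\varphi\,\d\mu+\int\varphi^c\,\d\nu.
\]
On the other hand, for \emph{any} competitor $\tilde\Pi$ with marginals $\mu,\nu$, the inequality $\varphi(x)+\varphi^c(y)\le\frac12\dist^2(x,y)$ holds everywhere, so $\int\tfrac12\dist^2\,\d\tilde\Pi\ge\int\varphi\,\d\mu+\int\varphi^c\,\d\nu$. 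Combining, $\Pi$ attains the infimum, i.e.\ it is optimal.

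\emph{The hard direction (a) $\Rightarrow$ (b).} This is where the real work lies, and the main obstacle is producing the $c$-concave potential from scratch: one cannot simply quote duality, because in a possibly non-compact proper space one must be careful with measurability, integrability, and the existence of a \emph{maximal} dual pair whose $c$-superdifferential contains the support of $\Pi$. I would proceed in the following steps. First, establish \emph{$c$-cyclical monotonicity of $\supp(\Pi)$}: if $\Pi$ is optimal then for any finite collection $(x_1,y_1),\dots,(x_n,y_n)\in\supp(\Pi)$ and any permutation $\sigma$, $\sum_i\dist^2(x_i,y_i)\le\sum_i\dist^2(x_i,y_{\sigma(i)})$. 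The standard argument: if this failed at some configuration, by continuity it would fail on a product of small balls each of positive $\Pi$-measure, and one could then glue together the restricted/rescaled plans along the cycle to strictly lower the cost, using a gluing lemma for measures (disintegration of $\Pi$ over these balls) — contradicting optimality. Here one needs $\int\dist^2\,\d\Pi<\infty$ to guarantee the modified plan still has finite cost and the same marginals. Second, from a $c$-cyclically monotone set $\Gamma=\supp(\Pi)$, build the potential by the \emph{Rockafellar-type construction}: fix $(x_0,y_0)\in\Gamma$ and set
\[
\varphi(x):=\inf\Big\{\tfrac12\dist^2(x,y_m)-\tfrac12\dist^2(x_m,y_m)+\tfrac12\dist^2(x_m,y_{m-1})-\cdots-\tfrac12\dist^2(x_1,y_0)\Big\},
\]
the infimum over all finite chains $(x_1,y_1),\dots,(x_m,y_m)\in\Gamma$. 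One checks $\varphi$ is not identically $-\infty$ (evaluate on the trivial chain), that it is $c$-concave — indeed $\varphi=\psi^c$ where $\psi$ is the analogous ``conjugate'' expression, so $\varphi$ is automatically an infimum of functions of the form $\frac12\dist^2(\cdot,y)-\text{const}$, hence a $c$-transform — and, crucially, that $\Gamma\subset\partial^c\varphi$: for $(x,y)\in\Gamma$, appending $(x,y)$ to any chain ending in computing $\varphi^c(y)$ shows $\varphi(x)+\varphi^c(y)\ge\frac12\dist^2(x,y)$, while the reverse inequality is the defining constraint of the $c$-transform. Third, verify the integrability $\max\{\varphi,0\}\in L^1(\mu)$: from the chain with the single pair $(x_0,y_0)$ one gets a one-sided bound $\varphi(x)\le\frac12\dist^2(x,y_0)$, whose positive part is $\mu$-integrable since $\mu\in\mathcal P_2(X)$. (A matching lower bound $\varphi(x)\ge -c(1+\dist^2(x,x_0))$ of the same quadratic type, obtained by estimating each chain term via the triangle inequality, shows $\varphi^c$ is likewise controlled, which is what one needs to make the integration in the easy direction rigorous.) Assembling these, $\varphi$ is the desired $c$-concave function and (b) holds.

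\emph{Remark on where the difficulty concentrates.} The delicate point — and the one I would spend the most care on — is the passage \emph{optimality $\Rightarrow$ $c$-cyclical monotonicity of the support} in a non-compact proper metric space: the naive ``variation on finitely many points'' must be turned into a rigorous measure-theoretic perturbation (disintegrate $\Pi$ over small balls around each $(x_i,y_i)$, form the competitor that reroutes an $\varepsilon$-fraction of mass along the cheaper cycle), and one must ensure the competitor lies in $\mathcal P(X\times X)$ with the correct marginals and finite cost. Once this is in hand, the Rockafellar construction and the integrability checks are routine but must be done with the quadratic cost $c=\frac12\dist^2$ rather than a bounded cost, which is exactly why the hypothesis $\mu,\nu\in\mathcal P_2(X)$ and $\int\dist^2\,\d\Pi<\infty$ appear throughout.
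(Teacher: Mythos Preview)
The paper does not prove this theorem at all: immediately before the statement it says ``see \cite[Theorem 1.13]{AG-U} for a proof'' and then moves on. So there is no in-paper argument to compare against; Theorem~\ref{th:ftot} is quoted as a black-box preliminary from Ambrosio--Gigli's \emph{User's Guide}.

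That said, your sketch is essentially the standard proof one finds in that reference (and in Villani): the easy direction via the duality inequality, and the hard direction via \emph{optimality $\Rightarrow$ $c$-cyclical monotonicity of $\supp(\Pi)$ $\Rightarrow$ Rockafellar-type construction of a $c$-concave $\varphi$ with $\supp(\Pi)\subset\partial^c\varphi$}. The only points I would tighten are minor. First, in (b)$\Rightarrow$(a) you should not try to prove $\varphi^c\in L^1(\nu)$ outright; the clean way is: from $\max\{\varphi,0\}\in L^1(\mu)$ and $\varphi(x)+\varphi^c(y)=\tfrac12\dist^2(x,y)$ $\Pi$-a.e.\ with $\int\dist^2\,\d\Pi<\infty$, deduce $\max\{\varphi^c,0\}\in L^1(\nu)$, and then observe that the inequality $\int\varphi\,\d\mu+\int\varphi^c\,\d\nu\le\int\tfrac12\dist^2\,\d\tilde\Pi$ makes sense in $[-\infty,+\infty)$ for any competitor $\tilde\Pi$ (the positive parts are integrable, so the left side is well defined even if possibly $-\infty$). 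Second, the ``matching lower bound $\varphi(x)\ge -c(1+\dist^2(x,x_0))$'' you mention is not needed for the statement as written (only $\max\{\varphi,0\}\in L^1(\mu)$ is claimed), and in fact does not follow from the Rockafellar infimum without further argument; drop that sentence. With these adjustments your outline matches the cited proof.
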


\subsubsection*{Curvature-dimension condition on metric measure spaces}

In order to formulate curvature-dimension conditions,  we recall the definition of the  distortion coefficients.  
 For $\kappa \in \R$, define the functions $s_\kappa, c_\kappa: [0, +\infty) \mapsto \R$ (on $[0, \pi/ \sqrt{\kappa})$ if $\kappa >0$) as:
\begin{equation} \label{eq:skappa}
s_\kappa(\theta):=\left\{\begin{array}{lll}
(1/\sqrt {\kappa}) \sin (\sqrt \kappa \theta), &\text{if}~~ \kappa>0,\\
\theta, &\text{if}~~\kappa=0,\\
(1/\sqrt {-\kappa}) \sinh (\sqrt {-\kappa} \theta), &\text{if} ~~\kappa<0
\end{array}
\right.
\end{equation}
and
\begin{equation} 
c_\kappa(\theta):=\left\{\begin{array}{lll}
\cos (\sqrt \kappa \theta), &\text{if}~~ \kappa>0,\\
1, &\text{if}~~\kappa=0,\\
 \cosh (\sqrt {-\kappa} \theta), &\text{if} ~~\kappa<0.
\end{array}
\right.
\end{equation}
It can be seen that $s'_\kappa=c_\kappa$,  and both functions $s_\kappa, c_\kappa$ are  solutions to the following  (Riccati-type) ordinary differential equation 
\begin{equation} \label{eq:riccati}
u''+\kappa  u=0.
\end{equation}

For $K\in \R, N\in [1,\infty), \theta \in (0,\infty), t\in [0,1]$, 
we define the  distortion coefficients $\sigma_{K,N}^{(t)}$ and  $\tau_{K,N}^{(t)}(\theta)$ as
\begin{equation}\label{eq:Defsigma}
\sigma_{K,N}^{(t)}(\theta):= 
\begin{cases}
\infty, & \textrm{if}\ K\theta^{2} \geq N\pi^{2}, \crcr
t & \textrm{if}\ K \theta^{2}=0,  \crcr
\displaystyle   \frac{s_{\frac K{N}}(t\theta)}{s_{\frac K{N}}(\theta)} & \textrm{otherwise}
\end{cases}
\end{equation}
and
\begin{equation}\label{eq:deftau}
\tau_{K,N}^{(t)}(\theta): = t^{1/N} \sigma_{K,N-1}^{(t)}(\theta)^{(N-1)/N}.
\end{equation}

\bigskip

The following curvature-dimension conditions were introduced independently by Lott--Villani \cite{Lott-Villani09} and Sturm \cite{S-O1, S-O2} (with some differences).

\begin{definition}[$\cdkn$ condition]\label{def:CD}
Let $K \in \R$ and $N \in [1,\infty)$. A metric measure space  $(X,\dist,\mm)$ verifies $\cdkn$ if for any two $\mu_{0},\mu_{1} \in {\mathcal P}_{2}(X)$ 
with bounded support there exist a dynamical optimal plan $\pi \in \Opt(\mu_{0},\mu_{1})$ and an optimal transport plan $\Pi\in {\rm Opt}(\mu_0, \mu_1)$,  such that $\mu_{t}:=(e_{t})_{\sharp}\pi \ll \mm$ and for any $N'\geq N, t\in [0,1]$:
\begin{equation}\label{eq:defCD}
{\mathcal E}_{N'}(\mu_{t}) \geq \int_{X\times X} \tau_{K,N'}^{(1-t)} (\dist(x,y)) \rho_{0}^{-1/N'} (x)
+ \tau_{K,N'}^{(t)} (\dist(x,y)) \rho_{1}^{-1/N'} (y)\,\d\Pi(x, y)
\end{equation}
where the R\'enyi entropy  ${\mathcal E}_{N}$ is defined as
\begin{equation*}
{\mathcal E}_{N}(\mu):=
\left \{\begin{array}{ll}
\int \rho^{1-1/N} \,\d\mm &\text{if}~ \mu=\rho\,\mm\\
+\infty &\text{otherwise}.
\end{array}\right.
\end{equation*} 

\end{definition}


\medskip

\brk
It is worth recalling that if $(M,g)$ is a Riemannian manifold of dimension $n$ and 
$h \in C^{2}(M)$ with $h > 0$, then the  metric measure space  $(M,\dist_{g},h \, {\rm Vol}_{g})$ (where $\dist_{g}$ and ${\rm Vol}_{g}$ denote the Riemannian distance and volume induced by $g$) verifies $\cdkn$ with $N\geq n$ if and only if  (see  \cite[Theorem 1.7]{S-O2})
$$
 \Ric_{g,h,N} : =  \Ric_{g} - (N-n) \frac{\nabla_{g}^{2} h^{\frac{1}{N-n}}}{h^{\frac{1}{N-n}}} \geq  K g.  
$$
In particular if $N = n$ the generalized Ricci tensor $\Ric_{g,h,N}= \Ric_{g}$ makes sense only if $h$ is constant. 
\erk

\medskip

\brk \label{cdequiv}
A variant of the $\cdkn$ condition, called  reduced curvature dimension condition and denoted by  $\CD^{*}(K,N)$ \cite{BS-L},  asks for the same inequality \eqref{eq:defCD} of $\cdkn$ but  the
coefficients $\tau_{K,N}^{(t)}(\dist(\gamma_{0},\gamma_{1}))$ and $\tau_{K,N}^{(1-t)}(\dist(\gamma_{0},\gamma_{1}))$ 
are replaced by $\sigma_{K,N}^{(t)}(\dist(\gamma_{0},\gamma_{1}))$ and $\sigma_{K,N}^{(1-t)}(\dist(\gamma_{0},\gamma_{1}))$, respectively.
For both definitions there is a local version and it was recently proved in \cite{CavallettiEMilman-LocalToGlobal} that  on  an essentially  non-branching metric measure spaces with $\mm(X)<\infty$ (and in \cite{LZH22}  for general  $\sigma$-finite $\mm$), the  $\CD^{*}_{loc}(K,N)$, $\CD^{*}(K,N)$, $\CD_{loc}(K,N)$, $\cdkn$ conditions are all equivalent.
\erk

\subsection{Differential structure of metric measure spaces}

We recall some  facts about  calculus in metric measure spaces following the approach of \cite{AGS-C, AGS-M, G-O}. 

A function $f:X\to \R$ is called Lipschitz (or more precisely $L$-Lipschitz) if there exists a constant $L\geq 0$ such that 
$$
|f(x)-f(y)|\leq L \, \dist(x,y), \quad \forall x,y\in X.
$$
We denote by $\Lip(X, \dist)$ the space of real valued  Lipschitz functions on $(X,\dist)$ and with $\Lip_{c}(\Omega, \dist)\subset \Lip(X, \dist)$ the sub-space of Lipschitz functions  on $X$ with compact support contained in an open subset $\Omega\subset X$.

 Given $f\in \Lip(X, \dist)$, the \emph{local Lipschitz constant} 
$\lip f(x_{0})$ of $f$ at $x_{0}\in X$ is defined as
\begin{equation*}
\lip f(x_{0}):=\limsup_{x\to x_{0}}  \frac{|f(x)-f(x_{0})|}{\dist(x, x_{0})} \; \text{ if $x_{0}$ is not isolated}, \quad \lip f(x_{0})=0 \; \text{ otherwise}.
\end{equation*}

We say that $f\in L^2(X, \mm)$ is a Sobolev function in $W^{1,2}\ms$ if
\begin{eqnarray*}
 \inf \left\lbrace \liminf_{n \to \infty} \int_X \lip{f_n}^2\d\mm : f_n \in \Lip_{c}(X, \dist),\ \! f_n \to f \text{ in } L^2(X,\mm)\right\rbrace <+\infty.
\end{eqnarray*}
 For any $f\in W^{1,2}\ms$, there exists a sequence of Lipschitz  functions $(f_n) \subset L^2(X, \mm)$,  such that $f_n \to f$ and $\lip{f_n} \to G$ in $L^2$ for some $G \in L^2(X, \mm)$. There exists a minimal function $G$ in $\mm$-a.e. sense,  called  minimal weak  upper gradient (or weak gradient for simplicity) of  $f$, and we denote it by $|\D f|$.

\medskip

If $W^{1,2}\ms$ is a Hilbert space,   $(X,\dist,\mm)$ is called infinitesimally Hilbertian (cf. \cite{AGS-M, G-O}). In this case, for $f, u\in  W^{1,2}\ms$,  we define 
$$
\D f (\nabla u) : = \inf_{\epsilon > 0} \frac{ |\D (u + \epsilon f)|^{2} - |\D u|^{2} }{2\epsilon},
$$
and we have $\D f (\nabla u)= \D u (\nabla f)$. 

For infinitesimally Hilbertian metric measure spaces,  Cavalletti--E.Milman \cite{CavallettiEMilman-LocalToGlobal} and Li \cite{LZH22} prove the following equivalence.

\begin{proposition}[cf. Remark \ref{cdequiv}]\label{rcd}
For infinitesimally Hilbertian metric measure spaces,  
  the  $\CD^{*}_{loc}(K,N)$, $\CD^{*}(K,N)$, $\CD_{loc}(K,N)$, $\cdkn$ conditions are all equivalent,  and we denote them by $\rcdkn$. 
\end{proposition}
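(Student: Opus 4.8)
The plan is to reduce the statement to Remark~\ref{cdequiv} by establishing the only hypothesis missing there, namely that the space is \emph{essentially non-branching}. Indeed, once we know that an infinitesimally Hilbertian $\ms$ satisfying any one of $\CD^{*}_{loc}(K,N)$, $\CD^{*}(K,N)$, $\CD_{loc}(K,N)$, $\cdkn$ is essentially non-branching, the equivalence of these four conditions is exactly the content of \cite{CavallettiEMilman-LocalToGlobal} when $\mm(X)<\infty$ and of \cite{LZH22} for general $\sigma$-finite $\mm$, as recalled in Remark~\ref{cdequiv}; by definition the common condition is then $\rcdkn$.

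To produce the essential non-branching, I would first discard the upper dimension bound. By monotonicity of the R\'enyi functionals ${\mathcal E}_{N'}$ in $N'$ one gets $\cdkn\Rightarrow\cd$, while $\CD^{*}(K,N)$ implies $\CD^{*}(K,N')$ for every $N'\geq N$, and letting $N'\to\infty$ (so that $\sigma_{K,N'}^{(t)}(\theta)\to t$ and ${\mathcal E}_{N'}$ converges to the relative entropy in the usual scaling) one again recovers $\cd$; the two local conditions give, by the same computation, the local version $\CD_{\mathrm{loc}}(K,\infty)$. Combined with the Hilbertianity of $W^{1,2}\ms$, this places $\ms$, at least locally around every point, in the class $\rcd(K,\infty)$. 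I then invoke the Rajala--Sturm theorem that $\rcd(K,\infty)$ spaces are essentially non-branching (in the local form: an $\mm$-positive set of branching geodesics would already branch inside a small ball on which the space is $\rcd(K,\infty)$-like). This yields essential non-branching for $\ms$, and Remark~\ref{cdequiv} then closes the argument.

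The substance of the proof lies entirely in the two imported inputs, which I would use as black boxes. If one wanted a self-contained argument, the crucial point would be the Rajala--Sturm mechanism: the \emph{linearity} of the heat flow --- equivalently, the Hilbertianity of $W^{1,2}\ms$ --- is incompatible with an $\mm$-positive amount of branching of $W_2$-geodesics, which is proved via the essential uniqueness of optimal transport maps between absolutely continuous measures on $\cd$ spaces with Hilbertian Sobolev space, together with the metric Brenier theorem. The second genuinely technical ingredient, internal to \cite{CavallettiEMilman-LocalToGlobal, LZH22}, is the disintegration (needle decomposition) of $\mm$ along the transport rays of a Kantorovich potential, which is what upgrades the local curvature-dimension condition to the global $\cdkn$ once branching has been excluded.
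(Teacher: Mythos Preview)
The paper does not supply a proof of this proposition; it is simply recorded as a consequence of the cited works \cite{CavallettiEMilman-LocalToGlobal} and \cite{LZH22}. Your sketch therefore goes further than the paper itself by spelling out the bridge that those references implicitly rely on: infinitesimal Hilbertianity together with any one of the four conditions forces, via Rajala--Sturm \cite{RS-N}, the essential non-branching hypothesis needed in Remark~\ref{cdequiv}, after which the globalization theorems apply verbatim. This is indeed the standard route and is correct in outline. The one step that deserves more care than your parenthetical remark gives it is the passage from the \emph{local} conditions $\CD_{\loc}(K,N)$ or $\CD^*_{\loc}(K,N)$ to essential non-branching: Rajala--Sturm is a statement about global $\rcd(K,\infty)$ spaces, and the claim that ``an $\mm$-positive set of branching geodesics would already branch inside a small ball'' requires the restriction property of optimal dynamical plans together with absolute continuity of the intermediate marginals to be made rigorous. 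A cleaner alternative is to pass through the Bakry--\'Emery side of the equivalence in \cite{EKS-O, AGS-M}, whose local-to-global behaviour is immediate, thereby obtaining global $\rcd(K,\infty)$ before invoking \cite{RS-N}.
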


\begin{definition}[Measure valued Laplacian, cf. \cite{G-O}]\label{D:Laplace}
Let $\Omega\subset X$ be an open subset and let $u \in W^{1,2}\ms $. We say that $u$ is in the domain of the Laplacian of $\Omega$, and write $u \in \D({\bf \Delta},\Omega)$, provided  there exists a signed measure $\mu$ on $\Omega$ 
such that for any $f \in \Lip_{c}(\Omega, \dist)$ it holds 
\begin{equation}\label{eq:defTfLap}
 \int \D f (\nabla u) \,\d\mm = - \int f\,\d \mu.
\end{equation}

If  $\mu$ is unique,  we denote it by ${\bf \Delta} u$. If ${\bf \Delta} u \ll \mm$ and its density $\Delta u$ is locally finite,   we write $u\in {\rm D}({\Delta}, \Omega)$.
\end{definition}

\section{Alexandroff--Bakelman--Pucci estimate}\label{main}

\subsection{Contact sets}
\subsubsection*{$2$-contact set}\label{l2}

Let  $\Omega\subset X$ be a bounded  set,  $\De$ be a compact set,   $u$ be a continuous function and  $t>0$. 
Recall that the 2-contact set is defined as
  \begin{equation*}
 \Rem_2(\De, \Omega, u, t):=\left\{ x\in \overline \Omega:\, \exists y\in \De~\text {s.t.} ~\inf_{\overline \Omega}\Big(u+\frac{ \dist^2_y}{2t}\Big)=u(x)+\frac{ \dist^2(x, y)}{2t}\right\}.
 \end{equation*}
 
 \blm\label{lemma:gt2}
Let  $u \in C(\Omega)$. Then $-tu$ has a $c$-concave,  (upper) representative on $\Rem_2$, i.e. there is a $c$-concave function $\varphi$ so that   $\varphi=-tu$ on $\Rem_2$ and  $ \varphi\geq -tu$ on $\Omega$.

\elm
\begin{proof}
Define
\begin{equation*}
v(y):=
\left \{\begin{array}{ll}
\inf_{z\in \overline \Omega} \left( u(z)+\frac{ \dist^2(z,y)}{2t}\right) &\text{if}~ y\in \De\\
-\infty &\text{otherwise}.
\end{array}\right.
\end{equation*} 

By definition,
\begin{equation}\label{eq2:lemma2}
v(y)-u(z)\leq \frac{ \dist^2(z,y)}{2t}\qquad\forall (y, z)\in \De \times \overline \Omega
\end{equation}
and for any $x\in \Rem_2$ there is $y\in \De$ so that
\begin{equation}\label{eq1:lemma2}
v(y)-u(x)=\frac{\dist^2(x, y)}{2t}.
\end{equation}
So for any $x\in \Rem_2$, it holds
\begin{equation}\label{eq3:lemma1}
-tu(x)=\inf_{y\in \De}\left(-tv(y)+\frac{ \dist^2(x,y)}{2}\right)=\inf_{y\in X}\left(-tv(y)+\frac{ \dist^2(x,y)}{2}\right)=(tv)^c(x).
\end{equation}
For  $x\in \Omega \setminus \Rem_2$, by \eqref{eq2:lemma2} it holds 
\[
(tv)^c(x)=\inf_{y\in \De}\left(-tv(y)+\frac{ \dist^2(x,y)}{2}\right)\geq -tu(x).
\]


Then we  define   $\varphi$ by
\begin{equation*}
\varphi(x):=
{(tv)^c(x) }
\end{equation*} 
which fulfils our request.
\end{proof}

\medskip

Let $\ms$ be an $\rcdkn$ space.  By  the Laplacian comparison theorem \cite[Theorem 5.14]{G-O}, we know that the $c$-concave function $\varphi$ obtained in the last lemma is in $\D({\bf \Delta},\Omega)$, and the positive part of ${\bf \Delta} \varphi$, denoted by $({\bf \Delta} \varphi)^+$,  is absolutely continuous and has bounded density. In addition, we have the following estimate concerning the negative part of ${\bf \Delta} \varphi$.
\begin{lemma}\label{lemma:comparison}
Let $\ms$ be an $\rcdkn$ space. Let ${\bf \Delta} \varphi={\bf \Delta} \varphi\llcorner_{\Rem_2}+{\bf \Delta} \varphi\llcorner_{\Omega\setminus \Rem_2}$ be a decomposition of ${\bf \Delta} \varphi$. Assume $u\in {\rm D}(\Delta, \Omega)$ with $\Delta u\in L^\infty$.  We have
\[
{\bf \Delta} \varphi\llcorner_{\Rem_2} \geq -\Delta u \,\mm\llcorner_{\Rem_2}.
\]
\end{lemma}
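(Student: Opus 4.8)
The plan is to exploit the one-sided touching produced in Lemma~\ref{lemma:gt2}. Set $w := \varphi + tu$. By Lemma~\ref{lemma:gt2} we have $w\ge 0$ on $\Omega$ and $w = 0$ on $\Rem_2$, so $\Rem_2$ is contained in the minimum set $\{w=0\}$ of $w$. Since $\varphi\in\D({\bf \Delta},\Omega)$ (Laplacian comparison theorem, as recalled just above) and $u\in {\rm D}(\Delta,\Omega)\subset\D({\bf \Delta},\Omega)$, linearity of the measure-valued Laplacian gives $w\in\D({\bf \Delta},\Omega)$ with ${\bf \Delta}w = {\bf \Delta}\varphi + t\,\Delta u\,\mm$. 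Hence the lemma follows once we establish the \emph{minimum principle} ${\bf \Delta}w\llcorner_{\{w=0\}}\ge 0$: restricting this nonnegative measure further to $\Rem_2\subset\{w=0\}$ and unwinding the formula for ${\bf \Delta}w$ yields the stated bound on ${\bf \Delta}\varphi\llcorner_{\Rem_2}$ in terms of $\Delta u$.

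To prove the minimum principle, I would test ${\bf \Delta}w$ against $f_\eps := \eps^{-1}(\eps-w)^+\chi$, for $\eps>0$ and an arbitrary nonnegative $\chi\in\Lip_c(\Omega,\dist)$. Each $f_\eps$ is nonnegative, bounded, compactly supported in $\Omega$ and lies in $W^{1,2}$, with $\D f_\eps(\nabla w) = -\eps^{-1}\chi\,\mathbf 1_{\{w<\eps\}}|\D w|^2 + \eps^{-1}(\eps-w)^+\D\chi(\nabla w)$ by the chain rule for minimal weak upper gradients (writing $(\eps-w)^+ = \eps - w\wedge\eps$). Plugging $f_\eps$ into the defining identity \eqref{eq:defTfLap} — extended from $\Lip_c$ to bounded $W^{1,2}$ functions with compact support in $\Omega$ by the standard density argument, which is legitimate because ${\bf \Delta}w$ is a locally finite signed Radon measure — one gets
\[
\int f_\eps\,\d{\bf \Delta}w = \eps^{-1}\int \chi\,\mathbf 1_{\{w<\eps\}}|\D w|^2\dm - \eps^{-1}\int (\eps-w)^+\D\chi(\nabla w)\dm \ge -\int_{\{w<\eps\}\cap\supp\D\chi} |\D\chi|\,|\D w|\dm ,
\]
since the first term is nonnegative and $0\le (\eps-w)^+/\eps\le 1$ on $\{w<\eps\}$. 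As $\eps\downarrow 0$ the right-hand side tends to $0$, because $|\D w| = 0$ $\mm$-a.e.\ on the level set $\{w=0\}$ (locality of the minimal weak upper gradient), while $f_\eps\to\mathbf 1_{\{w=0\}}\chi$ pointwise with $0\le f_\eps\le\chi$, so dominated convergence against ${\bf \Delta}w$ gives $\int_{\{w=0\}}\chi\,\d{\bf \Delta}w\ge 0$. Letting $\chi$ range over nonnegative functions in $\Lip_c(\Omega,\dist)$, this is exactly ${\bf \Delta}w\llcorner_{\{w=0\}}\ge 0$.

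The main obstacle is precisely this minimum principle. In the smooth category it is the trivial remark that the Laplacian is nonnegative at an interior minimum; here the same ``second-order'' fact must be read off the measure ${\bf \Delta}w$ with no regularity beyond $w\in\D({\bf \Delta},\Omega)$, which is why the truncation argument above is needed. The only points requiring care are: (i) the extension of the admissible test-function class in \eqref{eq:defTfLap}; (ii) the chain rule $\D\big((\eps-w)^+\big)(\nabla w) = -\mathbf 1_{\{w<\eps\}}|\D w|^2$; and (iii) the vanishing of $|\D w|$ on $\{w=0\}$ — all standard in the Sobolev calculus of \cite{AGS-C, G-O}, but to be invoked in this precise form. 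The remaining steps, namely passing from $\{w=0\}$ to $\Rem_2$ and the linear algebra relating ${\bf \Delta}w$, ${\bf \Delta}\varphi$ and $\Delta u$, are routine.
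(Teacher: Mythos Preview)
Your argument is correct and takes a genuinely different route from the paper's.  Both proofs start from the same input, Lemma~\ref{lemma:gt2}: the function $w:=\varphi+tu$ is nonnegative on $\Omega$ and vanishes on $\Rem_2$.  The paper extracts the Laplacian inequality by heat-flow approximation: it shows that $\frac{P_s\varphi-\varphi}{s}\,\mm$ converges weakly to ${\bf\Delta}\varphi$, uses the pointwise bound $P_s\varphi-\varphi\ge -t(P_su-u)$ on $\Rem_2$ (an immediate consequence of $w\ge 0$ and positivity of $P_s$), and then localizes by approximating the indicator of a compact $\K\subset\Rem_2$ from above by continuous functions.  You instead prove the minimum principle ${\bf\Delta}w\llcorner_{\{w=0\}}\ge 0$ directly, via the truncated test functions $f_\eps=\eps^{-1}(\eps-w)^+\chi$.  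Your route is more elementary (no heat semigroup) and arguably more transparent; the paper's route has the advantage that the heat flow automatically produces continuous test objects, sidestepping your point~(i).  On that point, note that on an $\rcdkn$ space the hypothesis $\Delta u\in L^\infty$ forces $u$ to be locally Lipschitz, and $\varphi=(tv)^c$ is Lipschitz on the bounded set $\Omega$ since $\De$ is compact; hence $w$ is locally Lipschitz and each $f_\eps$ already lies in $\Lip_c(\Omega,\dist)$, so no extension of the test-function class is needed.

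One minor remark: unwinding ${\bf\Delta}w\llcorner_{\Rem_2}\ge 0$ with $w=\varphi+tu$ yields ${\bf\Delta}\varphi\llcorner_{\Rem_2}\ge -t\,\Delta u\,\mm\llcorner_{\Rem_2}$, with the contact-set parameter $t$ present.  The paper's own proof, once one disentangles the double use of the symbol $t$ for both the heat-flow time and the contact-set parameter, gives exactly the same inequality; it is this version, carrying the factor $t$, that is actually invoked in Step~4 of the proof of Theorem~\ref{th2}.  So your ``stated bound'' should read $-t\,\Delta u$ rather than $-\Delta u$.
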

\begin{proof}
Let $(P_t \varphi)_{t\geq 0}$  be the heat flow from $\varphi$.  We claim that $\frac{P_t \varphi-\varphi}{t}\, \mm$ converge weakly to  ${\bf \Delta} \varphi$ as $t \downarrow 0$.
Given a non-negative  function $g\in {\rm TestF}:= \D({ \Delta},\Omega)\cap C_c(\Omega)$ with  $\Delta g\in L^\infty$. By \cite[Lemma 2.55]{mondino2023weak} we know 
\[
\lmt{t}{0} \frac{P_t g(x)-g(x)}{t}=\Delta g (x)~~~\text{for}~ \mm\text{-a.e.}~x\in \Omega.
\] By dominated convergence theorem we have
\begin{eqnarray*}
&&\lmt{t}{0}\int g \frac{P_t \varphi-\varphi}{t}\,\d \mm = \lmt{t}{0}\int \varphi \frac{P_t g-g}{t}\,\d \mm\\
&=& \int \varphi \Delta g\,\d \mm= \int g\,\d {\bf \Delta \varphi}.
\end{eqnarray*}
By Riesz--Markov--Kakutani representation theorem and
the density of  ${\rm TestF}$ in $\Lip_c(\Omega, \dist)$,   we know $ \frac{P_t \varphi-\varphi}{t}$ converge to ${\bf \Delta} \varphi$ as $t \downarrow 0$. 

Let  $\K\subset \Rem_2$ be a compact set and $h\in C(\K)$ be  such that $h>0$ on $\K$ and $h=0$ on $\Omega\setminus \K$. We can find a sequence $(h_n)_{n\in \N} \subset C_c(\Omega)$ such that $ h_n\geq h$ and
$h_n \downarrow h$ pointwisely. Then
\begin{eqnarray*}
&&\lmt{t}{0}\int h \frac{P_t \varphi-\varphi}{t}\,\d \mm \leq  \lmt{t}{0}\int h_n \frac{P_t \varphi-\varphi}{t}\,\d \mm\\
&=&  \int h_n\,\d {\bf \Delta \varphi}=  \int (h_n-h)\,\d {\bf \Delta \varphi}+ \int h\,\d {\bf \Delta \varphi}\\
&\leq& \int (h_n-h)\,\d ({\bf \Delta \varphi})^+ +\int h\,\d {\bf \Delta \varphi}.
\end{eqnarray*}
 Letting $n\to \infty$ we get
 \begin{equation}\label{eq:hflim}
 \lmt{t}{0}\int h \frac{P_t \varphi-\varphi}{t}\,\d \mm \leq \int h\,\d {\bf \Delta \varphi}.
 \end{equation}
 
  For any $x\in \Rem_2$,  by Lemma \ref{lemma:gt2} we know 
\[
P_t \varphi(x)-\varphi(x) \geq - t\big( P_t  u(x)- u(x)\big).
\]
Thus
\begin{eqnarray*}
\lmt{t}{0}\int h \frac{P_t \varphi-\varphi}{t}\,\d \mm \geq  -\lmt{t}{0}\int_\K h (x)\frac{P_t  u(x)- u(x)}{t}\,\d \mm(x)
= -\int h \Delta u \,\d \mm.
\end{eqnarray*}
Combining with \eqref{eq:hflim} and  the arbitrariness of  $h, \K$,  we know 
\[
{\bf \Delta} \varphi\llcorner_{\Rem_2} \geq -\Delta u \,\mm\llcorner_{\Rem_2}.
\]
\end{proof}
\subsubsection*{$1$-contact set}\label{l1}

\bigskip
Recall that the 1-contact set  $ \Rem_1(\De, \Omega,  u, t)$ is defined as
 \begin{equation*}
 \Rem_1(\De, \Omega,  u, t):=\left\{ x\in \overline \Omega:\, \exists y\in \De~\text {s.t.}  ~\dist(x, y)=t, \inf_{\overline \Omega}\big(u+\dist_y\big)=u(x)+\dist(x, y)\right\}
 \end{equation*}
 and
  \begin{eqnarray*}
  \Rem^*_1(\De, \Omega,  u)&:=&\bigcup_{t\geq 0} \Rem_1(\De, \Omega,  u, t)\\&=&\left\{ x\in \overline \Omega: \exists y\in \De~~\text {s.t.}~ \inf_{\overline \Omega}\big(u+\dist_y\big)=u(x)+\dist(x, y)\right\}.
 \end{eqnarray*}

We have the following  lemma concerning the Lipschitz regularity of $u$ on the contact set $\Rem^*_1$.  This lemma has its own interest in the  viewpoint of optimal transport theory.
 
\blm\label{lemma:gt}
Let  $\Omega\subset X$ be a bounded open set and $u$ be a continuous function defined on $\Omega$.  Define a 1-Lipschitz function $u^\dist$ on $\De$ by
\[
u^\dist(y):=\inf_{x\in \overline \Omega}\big(u(x)+\dist_y(x)\big)\qquad y\in \De.
\]Then  for any $t>0$ we have
\begin{itemize}
\item [$(1)$] $u$ is 1-Lipschitz on  $\Rem^*_1(\De, \Omega,  u)$.
\item [$(2)$] $u^\dist=u$ on $\De\cap \Rem^*_1(\De, \Omega,  u)$.
\item [$(3)$] For any $(x, z)\in \Rem^*_1(\De, \Omega,  u) \times \De$,  it holds
\[
-\dist(x, z)\leq  u^\dist(z)-u(x)\leq \dist(x, z).
\]
\end{itemize}

Furthermore,   for any $x\in \Rem^*_1(\De, \Omega,  u)$,  $y\in \De$ with  $u(x)+\dist(x, y)=u^\dist(y)$,   and any geodesic $\gamma\subset X$ connecting $x$ and $y$,  we have
 \[
 u(x')-u(x)=\dist(x', x)\qquad\forall x'\in \Rem^*_1(\De, \Omega,  u) \cap \gamma.
 \]
\elm
\begin{proof}
(1)  By definition, for any $x\in \Rem^*_1(\De, \Omega,  u)$ there is $y\in \De$ so that
\begin{equation}\label{eq0:lemma1}
u^\dist(y)=\inf_{\overline \Omega}\big(u+\dist_y\big)=u(x)+\dist_y(x).
\end{equation}
Then for any $x'\in \Rem^*_1(\De, \Omega,  u)$,
\begin{equation}\label{eq1:lemmagt}
u(x')+\dist_y(x')\geq u(x)+\dist_y(x).
\end{equation}
By triangle inequality
\[
u(x)-u(x')\leq \dist_y(x')-\dist_y(x) \leq \dist(x', x).
\]
By symmetry we also have
\[
u(x')-u(x)\leq  \dist(x', x)
\]
so   $u$ is 1-Lipschitz on $\Rem^*_1(\De, \Omega,  u)$.

\medskip

(2)  For $y\in \De\cap \Rem^*_1(\De, \Omega,  u)$,  on one hand, since $u$ is 1-Lipschitz on $\Rem^*_1(\De, \Omega,  u)$, we have 
\[
u(y)-u(x)\leq \dist(x, y)\qquad \forall x\in \Rem^*_1(\De, \Omega,  u)
\] 
so that
\[
u^\dist(y)=\inf_{\overline \Omega}\big(u+\dist_y\big)=\inf_{x\in \Rem^*_1}\big(u(x)+\dist(x, y)\big)\geq u(y).
\]
On the other hand, we have the following trivial inequality
\[
u^\dist(y)=\inf_{\overline \Omega}\big(u+\dist_y\big)\leq u(y)+\dist_y(y)=u(y).
\]
In conclusion, $u^\dist=u$ on $\De\cap \Rem^*_1(\De, \Omega,  u)$.
\medskip

(3) By definition of $u^\dist$,  we have
\begin{equation}\label{eq:udd}
u^\dist(z)-u(x)\leq \dist(x, z)\qquad (x, z)\in \overline\Omega \times \De,
\end{equation}
and for any $x\in \Rem^*_1(\De, \Omega,  u)$,   there is $y\in \De$ such that
\[
u^\dist(y)-u(x)=\dist(x, y).
\] 
So for any $(x, z)\in \Rem^*_1(\De, \Omega,  u) \times \De$,  we have
\[
-\dist(x, z)\leq u^\dist(z)+\dist(x, y)-u^\dist(y)=u^\dist(z)-u(x)\leq \dist(x, z)
\]
where in the first inequality we use the fact  that $u^\dist$ is 1-Lipschitz on $\De$.

\medskip

(4) At last, let $\gamma$ be a geodesic connecting $x\in \Rem^*_1(\De, \Omega,  u)$ and $y\in \De$.   By \eqref{eq1:lemmagt}, for any $x'\in  \Rem^*_1(\De, \Omega,  u) \cap \gamma$ it holds
\[
u(x')-u(x)\geq \dist_y(x)-\dist_y(x')= \dist(x, x').
\]
By (1),  $u$ is 1-Lipschitz on $\Rem^*_1(\De, \Omega,  u)$,  so we have $u(x')-u(x)= \dist(x, x')$.

\end{proof}

\bigskip

\begin{lemma}\label{lemma:gt3}
The function $-tu$ has a $c$-concave (upper) representative $\varphi$  on $\Rem_1(\De, \Omega,  u, t)$.  If  $u\in {\rm D}(\Delta, \Omega)$ with $\Delta u\in L^\infty$, then it holds a Laplacian estimate 
\[
{\bf \Delta} \varphi\llcorner_{\Rem_1} \geq -\Delta u \,\mm\llcorner_{\Rem_1}.
\] 
\end{lemma}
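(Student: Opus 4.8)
The plan is to follow the template of Lemmas~\ref{lemma:gt2} and~\ref{lemma:comparison}, replacing the quadratic touching function by one built from the $L^1$-datum $u^\dist$ of Lemma~\ref{lemma:gt}. The only genuinely new ingredient is the construction of the $c$-concave upper representative; the Laplacian estimate then follows by repeating verbatim the heat-flow argument of Lemma~\ref{lemma:comparison}. The mechanism that produces a $c$-concave (i.e.\ quadratic-cost) object out of the linear-cost data $\dist_y$ is the elementary identity
\[
\frac{\dist^2(x,y)}{2}-t\,\dist(x,y)+\frac{t^2}{2}=\frac{\big(\dist(x,y)-t\big)^2}{2}\geq 0,
\]
which vanishes exactly when $\dist(x,y)=t$ --- and $\dist(x,y)=t$ is precisely the constraint appearing in the definition of $\Rem_1(\De,\Omega,u,t)$.

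Concretely, recall the $1$-Lipschitz function $u^\dist(y)=\inf_{z\in\overline\Omega}\big(u(z)+\dist_y(z)\big)$ on $\De$ from Lemma~\ref{lemma:gt}, define $\psi\colon X\to\R\cup\{-\infty\}$ by $\psi(y):=t\,u^\dist(y)-\tfrac{t^2}{2}$ for $y\in\De$ and $\psi(y):=-\infty$ otherwise, and set $\varphi:=\psi^c$, that is
\[
\varphi(x)=\inf_{y\in\De}\Big(\frac{\dist^2(x,y)}{2}-t\,u^\dist(y)+\frac{t^2}{2}\Big),\qquad x\in X.
\]
Since $\De$ is compact and $u^\dist$ is continuous, $\varphi$ is everywhere finite, hence $c$-concave by definition. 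It is an upper representative of $-tu$ on $\Rem_1$: if $x\in\Rem_1$ with witness $y\in\De$ (so $\dist(x,y)=t$ and $u^\dist(y)=u(x)+t$), testing the infimum with this $y$ gives $\varphi(x)\leq -tu(x)$; conversely, for every $x\in\overline\Omega$ and $y\in\De$ one has $u^\dist(y)\leq u(x)+\dist(x,y)$, which combined with the displayed identity yields $\tfrac{\dist^2(x,y)}{2}-t\,u^\dist(y)+\tfrac{t^2}{2}\geq -tu(x)$; hence $\varphi\geq -tu$ on $\overline\Omega$ and therefore $\varphi=-tu$ on $\Rem_1$.

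For the Laplacian bound I would run the heat-flow argument of Lemma~\ref{lemma:comparison} essentially without change. Being $c$-concave, $\varphi\in\D({\bf\Delta},\Omega)$ with $({\bf\Delta}\varphi)^+\ll\mm$ of bounded density by \cite[Theorem~5.14]{G-O}. One shows $\tfrac{1}{s}(P_s\varphi-\varphi)\,\mm\weakto{\bf\Delta}\varphi$ as $s\downarrow 0$ (duality against ${\rm TestF}$, \cite[Lemma~2.55]{mondino2023weak}, dominated convergence); testing against $h\in C(\K)$ with $h>0$ on a compact $\K\subset\Rem_1$ and $h=0$ off $\K$, and approximating by $h_n\downarrow h$ with $h_n\in C_c(\Omega)$, gives $\lmt{s}{0}\int h\,\tfrac{P_s\varphi-\varphi}{s}\dm\leq\int h\,\d{\bf\Delta}\varphi$. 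On the other hand, $\varphi\geq -tu$ on $\Omega$ with equality on $\Rem_1$ forces $P_s\varphi(x)-\varphi(x)\geq -t(P_su(x)-u(x))$ for $x\in\Rem_1$, so, using $u\in{\rm D}(\Delta,\Omega)$ with $\Delta u\in L^\infty$ and \cite[Lemma~2.55]{mondino2023weak} again, the same quantity is $\geq -\int h\,\Delta u\dm$ in the limit $s\downarrow 0$. Varying $h$ and $\K$ yields ${\bf\Delta}\varphi\llcorner_{\Rem_1}\geq -\Delta u\,\mm\llcorner_{\Rem_1}$.

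I expect the construction of $\varphi$ to be the only real difficulty: once the ``completing the square'' identity is spotted, checking the upper-representative property is a two-line computation, and the Laplacian comparison is a word-for-word copy of Lemma~\ref{lemma:comparison}, which uses $\varphi$ only through being $c$-concave, lying above $-tu$ on $\Omega$, and agreeing with it on the contact set. The remaining technical nuisance --- finiteness of $u^\dist$ and of $\varphi$ when $u$ is merely continuous --- is dealt with exactly as in Lemmas~\ref{lemma:gt2}--\ref{lemma:gt}.
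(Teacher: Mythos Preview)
Your argument is correct. The Laplacian estimate is, as you say, a verbatim repetition of Lemma~\ref{lemma:comparison}, and your construction of the $c$-concave upper representative works exactly as claimed.

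The paper's construction differs slightly from yours. Rather than building $\psi$ from the $L^1$-transform $u^\dist$, the paper reuses the quadratic-cost function
\[
v(y):=\inf_{z\in\overline\Omega}\Big(u(z)+\frac{\dist^2(z,y)}{2t}\Big),\qquad y\in\De,
\]
already introduced in Lemma~\ref{lemma:gt2}, and sets $\varphi:=(tv)^c$. To make this work on $\Rem_1$ one must check that for $x\in\Rem_1$ with witness $y_x$ the infimum defining $v(y_x)$ is actually attained at $z=x$; the paper does this via the AM--GM inequality $\frac{\dist^2(z,y_x)}{2t}\geq \dist(z,y_x)-\frac t2$ together with the definition of $u^\dist$. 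Your completing-the-square identity is the same algebraic fact in a different guise, applied one step later. The two constructions produce (in general) different $c$-concave representatives, both satisfying $\varphi=-tu$ on $\Rem_1$ and $\varphi\geq -tu$ on $\overline\Omega$; yours is arguably more transparent about why the constraint $\dist(x,y)=t$ meshes with the quadratic cost, while the paper's has the virtue of literally reusing the function $v$ from the $\Rem_2$ case, making the parallel between Lemmas~\ref{lemma:gt2} and~\ref{lemma:gt3} more visible.
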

\begin{proof}

Define
\begin{equation*}
v(y):=
\left \{\begin{array}{ll}
\inf_{z\in \overline \Omega} \left( u(z)+\frac{ \dist^2(z,y)}{2t}\right) &\text{if}~ y\in \De\\
-\infty &\text{otherwise}.
\end{array}\right.
\end{equation*} 

By definition,
\begin{equation}\label{eq03:lemma2}
v(y)-u(z)\leq \frac{ \dist^2(z,y)}{2t}\qquad\forall (y, z)\in \De \times \overline \Omega.
\end{equation}
Given $x\in \Rem_1(\De, \Omega,  u, t)$,  there is $y_x\in \De$ so that
\begin{equation}\label{eq00:th1}
 u^\dist(y_x)= u(x)+\dist(x, y_x)=u(x)+t.
\end{equation}
Then 
\[
u(x)+\frac{ \dist^2(x,y_x)}{2t}= u^\dist(y_x)-t+\frac t2= u^\dist(y_x)-\frac t2.
\]
For any other $z\in \overline \Omega$, by Cauchy inequality and \eqref{eq:udd},  it holds
\begin{eqnarray*}
&& u(z)+\frac{ \dist^2(z,y_x)}{2t}\\
&\overset{\text{Cauchy}}\geq &u(z)-\frac t2+\dist(z, y_x) \overset{\eqref{eq:udd}} \geq  u^{\dist}(y_x)-\frac t2=u(x)+\frac{ \dist^2(x,y_x)}{2t},
\end{eqnarray*}
so
\begin{equation}\label{eq02:lemma2}
u(x)+\frac{\dist^2(x, y_x)}{2t}=\inf_{z\in \overline \Omega} \left( u(z)+\frac{ \dist^2(z,y_x)}{2t}\right)=v(y_x).
\end{equation}

Combining \eqref{eq03:lemma2} and \eqref{eq02:lemma2} we get
\begin{equation}\label{eq01:lemma2}
-tu(x)=\inf_{y\in \De}\left(-tv(y)+\frac{ \dist^2(x,y)}{2}\right)=\inf_{y\in X}\left(-tv(y)+\frac{ \dist^2(x,y)}{2}\right)=(tv)^c(x)
\end{equation}
for  $x\in \Rem_1$ and $-tu(x) \leq  (tv)^c(x)$ for $x\in \overline \Omega$.

Thus $(tv)^c$ is the desired upper representative, and the Laplacian estimate can be proved using the same argument as Lemma \ref{lemma:comparison}.
\end{proof}

\subsection{Functional ABP estimate}
To study the   curvature-dimension condition with  finite dimension parameter (cf. Definition \ref{def:CD}),
Erbar--Kuwada--Sturm \cite{EKS-O} introduced  a notion called \emph{entropic curvature-dimension condition} $\mathrm {CD}^e(K, N)$.  For  infinitesimally Hilbertian metric measure spaces,  they show that $\mathrm {CD}^e(K, N)$ is equivalent to  the reduced curvature-dimension condition ${\rm CD}^*(K, N)$ introduced by Bacher--Sturm \cite{BS-L} (cf. Remark \ref{cdequiv} and Proposition \ref{rcd}). Following their footprints, we can prove the following differential inequality (see \cite[Lemma 2.2]{EKS-O}).

\begin{lemma}[$(K, N)$-convexity]\label{lemmakn}
Let $\ent{\mm}$ be the relative entropy defined as
\begin{equation*}
{\rm Ent}_\mm(\mu):=
\left \{\begin{array}{ll}
\int \rho\ln \rho\,\d \mm &\text{if}~ \mu=\rho\,\mm\\
+\infty &\text{otherwise}.
\end{array}\right.
\end{equation*} 
 and $U_N:=\exp{\left(-\frac 1N \ent{\mm}\right)}$.  Assume that $\ms$ is an  ${\rm RCD}(K, N)$ space.
Then for each pair $\mu_0, \mu_1 \in \mathcal P_2(X)$ with $\mu_0, \mu_1 \ll \mm$, there is a   geodesic $(\mu_t)_{t \in [0,1]}$ in $\big(\mathcal P_2(X), W_2\big)$ from $\mu_0$ to $\mu_1$ so that for all $t\in [0, 1]$ we have 
\begin{equation}\label{1:lm3}
U_N(\mu_1)\leq c_{K/N}\big(W_2(\mu_0, \mu_1)\big) U_N(\mu_0)+\frac{s_{K/N}\big(W_2(\mu_0, \mu_1)\big)}{W_2(\mu_0, \mu_1)}\frac {\d^-}{\d t} \restr{t=0} U_N(\mu_t)
\end{equation}
where
\[
\frac {\d^-}{\d t} \restr{t=0} U_N(\mu_t):=\liminf_{h\downarrow 0} \frac{U_N(\mu_h)- U_N(\mu_0)}{h}.
\]
\end{lemma}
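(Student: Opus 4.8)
The plan is to derive the inequality \eqref{1:lm3} from the $(K,N)$-convexity property of the functional $U_N = \exp(-\frac1N \ent{\mm})$ along $W_2$-geodesics, which is precisely the content of the entropic curvature-dimension condition $\mathrm{CD}^e(K,N)$. By the work of Erbar--Kuwada--Sturm \cite{EKS-O} together with Proposition \ref{rcd}, the assumption $\rcdkn$ is equivalent to $\mathrm{CD}^e(K,N)$, which asserts that for $\mu_0,\mu_1\ll\mm$ there is a $W_2$-geodesic $(\mu_t)$ along which
\[
\ent{\mm}(\mu_t) \le (1-t)\,\ent{\mm}(\mu_0) + t\,\ent{\mm}(\mu_1) - \frac{K}{2}\,t(1-t)\,W_2^2(\mu_0,\mu_1),
\]
or, equivalently in the language of \cite[Definition 1.3]{EKS-O}, that $t\mapsto U_N(\mu_t)$ satisfies the differential inequality $\tfrac{\d^2}{\d t^2}U_N(\mu_t) \le -\tfrac{K}{N}\,W_2^2(\mu_0,\mu_1)\,U_N(\mu_t)$ in a suitable weak (distributional) sense along the geodesic. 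Write $\theta := W_2(\mu_0,\mu_1)$ and $g(t) := U_N(\mu_t)$; the task reduces to showing that a (semiconcave, continuous, nonnegative) function $g$ on $[0,1]$ with $g'' \le -\tfrac{K}{N}\theta^2 g$ satisfies
\[
g(1) \le c_{K/N}(\theta)\,g(0) + \frac{s_{K/N}(\theta)}{\theta}\,\frac{\d^-}{\d t}\Big|_{t=0} g(t).
\]

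First I would record the comparison principle for the ODE $u'' + \tfrac{K}{N}\theta^2 u = 0$. Its fundamental solutions are exactly $t\mapsto c_{K/N}(t\theta)$ and $t\mapsto s_{K/N}(t\theta)/\theta$ (by \eqref{eq:riccati} and the chain rule, noting the rescaling by $\theta$), with Wronskian-type initial data: the first is $1$ at $t=0$ with zero derivative, the second is $0$ at $t=0$ with derivative $1$. Then I would consider the ``error'' function
\[
E(t) := g(t) - c_{K/N}(t\theta)\,g(0) - \frac{s_{K/N}(t\theta)}{\theta}\,p,
\qquad p := \frac{\d^-}{\d t}\Big|_{t=0} g(t),
\]
and check that $E(0) = 0$, that $E$ has nonpositive lower right derivative at $0$ (by the definition of $p$ as the $\liminf$ of forward difference quotients), and that $E'' \le -\tfrac{K}{N}\theta^2 E$ in the weak sense on $(0,1)$, since $g$ and both comparison functions solve (resp.\ sub-solve) the same linear equation and the equation is linear. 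A maximum-principle / Sturm-comparison argument — valid on the interval where $s_{K/N}(t\theta) > 0$, i.e.\ for $t\theta < \pi\sqrt{N/K}$ when $K>0$, which is guaranteed since otherwise $\sigma_{K,N}$ and hence the relevant $\rcdkn$ geodesic bound is vacuous — then forces $E(t) \le 0$ for all $t\in[0,1]$; evaluating at $t=1$ gives the claim. Concretely, one multiplies by the positive solution $s_{K/N}(\cdot\,\theta)$ on the complementary interval, or argues directly that $E$ cannot attain a positive interior maximum, using $E(0)=0$ and the one-sided derivative condition to handle the endpoint $t=0$.

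The main obstacle is the regularity and the precise weak formulation of ``$g'' \le -\tfrac{K}{N}\theta^2 g$'': along a $W_2$-geodesic in an $\rcdkn$ space one only knows $g = U_N(\mu_t)$ is continuous on $[0,1]$, locally semiconcave on $(0,1)$, and satisfies the $(K,N)$-concavity inequality in the integrated/distributional sense coming from $\mathrm{CD}^e(K,N)$; one does not have a classical twice-differentiable function. So the comparison argument must be carried out at the level of difference quotients and distributional second derivatives, using that a semiconcave function admits left and right derivatives everywhere and that the forward derivative at $0$ dominates the relevant $\liminf$. I would handle this by approximation: either invoke the standard fact (as in \cite[Lemma 2.2]{EKS-O} and its proof) that an upper semicontinuous function satisfying such a weak differential inequality obeys the stated comparison with its one-sided initial slope, or regularize $g$ (e.g.\ by convolution in $t$ after extending, or by the semiconcavity structure) to reduce to the classical ODE comparison and then pass to the limit, taking care that the $\liminf$ defining $\tfrac{\d^-}{\d t}|_{t=0}U_N(\mu_t)$ survives the limiting procedure. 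The degenerate case $g(0)=0$ (i.e.\ $\mu_0$ not in the domain of $\ent{\mm}$ up to the exponential, so $U_N(\mu_0)=0$) and the case $K>0$ with $\theta$ near the threshold should be noted as borderline but are covered by the same inequality read with the conventions on $s_{K/N}, c_{K/N}$.
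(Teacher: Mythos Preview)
Your approach is correct but takes a longer route than the paper's. The paper does not pass through the differential inequality $g'' \le -\tfrac{K}{N}\theta^2 g$ and an ODE comparison argument at all. Instead, it invokes directly the \emph{integrated} form of $(K,N)$-concavity that defines $\mathrm{CD}^e(K,N)$ (\cite[Definition~3.1, Definition~2.7]{EKS-O}): along a suitable $W_2$-geodesic,
\[
U_N(\mu_t)\ \ge\ \sigma^{(1-t)}_{K/N}(\theta)\,U_N(\mu_0)+\sigma^{(t)}_{K/N}(\theta)\,U_N(\mu_1),\qquad \theta:=W_2(\mu_0,\mu_1).
\]
Subtracting $U_N(\mu_0)$, dividing by $t$, and letting $t\downarrow 0$ immediately yields \eqref{1:lm3}, using only the elementary limits $\tfrac{1}{t}\sigma^{(t)}_{K/N}(\theta)\to \theta/s_{K/N}(\theta)$ and $\tfrac{1}{t}\big(\sigma^{(1-t)}_{K/N}(\theta)-1\big)\to -\theta\,c_{K/N}(\theta)/s_{K/N}(\theta)$. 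This is a two-line computation.

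The practical difference: by working with the $\sigma$-inequality the paper completely bypasses the regularity obstacle you identify --- no semiconcavity, no distributional second derivatives, no maximum principle or Sturm comparison is needed, because the comparison with the model ODE is already encoded in the coefficients $\sigma^{(t)}_{K/N}$. Your route essentially reproves one direction of \cite[Lemma~2.2]{EKS-O} (integrated $\Leftrightarrow$ differential $(K,N)$-concavity) before differentiating, which is sound but redundant here. What your approach does buy is a clearer explanation of \emph{why} $c_{K/N}$ and $s_{K/N}$ appear (as the fundamental system for $u''+\tfrac{K}{N}\theta^2 u=0$); the paper's argument treats them as given through the $\sigma$-coefficients.
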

\begin{proof}
By \cite[Definition 3.1, Definition 2.7]{EKS-O}, 
 there is a  constant speed geodesic $(\mu_t)_{t \in [0,1]}$ in the Wasserstein space $\big(\mathcal P_2(X), W_2\big)$ connecting $\mu_0$ and $\mu_1$, so that for all $t\in [0, 1]$ it holds
\begin{equation}\label{0:lm3}
 U_N(\mu_t)\geq \sigma^{(1-t)}_{K/N}\big(W_2(\mu_0, \mu_1)\big) U_N(\mu_0)+\sigma^{(t)}_{K/N}\big(W_2(\mu_0, \mu_1)\big) U_N(\mu_1).
\end{equation}

Subtracting $U_N (\mu_0)$ on both sides of \eqref{0:lm3},  dividing by $t$ and letting $t \downarrow 0$, we get \eqref{1:lm3}.
\end{proof}

\medskip

Next,  we  need to find an upper bound  of $\frac {\d^-}{\d t} \restr{t=0} U_N(\mu_t)$.  To do this, we  make use of a strategy of Gigli \cite[Proposition 5.10]{G-O}.

\begin{lemma}[Bound from above on the derivative of the entropy]\label{lemmadr}
Let $\ms$ be an   $\rcdkn$ metric measure space.
Let $ \mu_0\in \mathcal P_2(\overline \Omega)$ be such that $\mu_0\ll \mm\llcorner_{\Omega}$ with $\mu_0 = \rho \mm\llcorner_{\Omega}$. Assume that $\mm(\partial \Omega) = 0$ and assume also that the restriction of $\rho$ to $\Omega$ is Lipschitz and bounded from below by a positive constant. Let $\mu_1 \in  \mathcal P_2(X)$, $\pi \in  \Opt(\mu_0,\mu_1)$ and $\mu_t:=(e_t)_\sharp \pi$. Then it holds
\begin{equation}\label{1:lm4}
\liminf_{t\downarrow 0} \frac{U_N(\mu_t)- U_N(\mu_0)}{t}\leq  \frac 1N U_N(\mu_0)  \int_\Omega \D \rho(\nabla \varphi)\,\d \mm
\end{equation}
where   $\varphi$ is any Kantorovich potential from $\mu_0$ to $\mu_1$.
\end{lemma}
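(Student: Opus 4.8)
The plan is to differentiate the entropy along the $W_2$-geodesic $(\mu_t)$ using the first-order behaviour of the density, following the standard "Otto calculus" computation in the non-smooth setting as in Gigli's \cite[Proposition 5.10]{G-O}. First I would invoke the characterization of $W_2$-geodesics issuing from an absolutely continuous measure: since $\rho$ is Lipschitz on $\Omega$ and bounded below by a positive constant, and $\mm(\partial\Omega)=0$, the intermediate measures $\mu_t$ are absolutely continuous for small $t$, say $\mu_t=\rho_t\,\mm$, and the map $t\mapsto \ent{\mm}(\mu_t)$ is locally Lipschitz near $t=0$ with the initial slope controlled from above by $\int_\Omega \D\rho(\nabla\varphi)\,\d\mm$, where $\varphi$ is the Kantorovich potential from $\mu_0$ to $\mu_1$. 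Concretely, one writes $\ent{\mm}(\mu_t)-\ent{\mm}(\mu_0)=\int \big(\rho_t\ln\rho_t-\rho\ln\rho\big)\,\d\mm$ and uses the push-forward structure $\mu_t=(e_t)_\sharp\pi$ together with the change-of-variables / semicontinuity estimates to bound $\limsup_{t\downarrow 0}\tfrac{1}{t}\big(\ent{\mm}(\mu_t)-\ent{\mm}(\mu_0)\big)\le -\int_\Omega \D\rho(\nabla\varphi)\,\d\mm$. Here the sign works out because $\varphi$ generates, via $\nabla\varphi$, the velocity field of the geodesic at $t=0$, and integrating by parts the continuity equation against $\ln\rho$ (or rather, using the monotonicity built into the displacement-convexity inequality of $\cdkn$) gives exactly the term $-\int \D\rho(\nabla\varphi)\,\d\mm$.

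Next, I would convert this bound on the entropy derivative into the claimed bound on the derivative of $U_N=\exp(-\tfrac1N\ent{\mm})$. By the chain rule, along the (locally Lipschitz in $t$) curve $t\mapsto \ent{\mm}(\mu_t)$ one has, at least in the sense of $\liminf$ of difference quotients,
\[
\liminf_{t\downarrow 0}\frac{U_N(\mu_t)-U_N(\mu_0)}{t}
= -\frac1N U_N(\mu_0)\liminf_{t\downarrow 0}\frac{\ent{\mm}(\mu_t)-\ent{\mm}(\mu_0)}{t}
\]
provided the one-sided derivative of the entropy exists; more carefully, since $U_N\ge 0$ and $x\mapsto e^{-x/N}$ is decreasing, an upper bound on $\liminf_{t\downarrow 0}\tfrac1t(U_N(\mu_t)-U_N(\mu_0))$ follows from a lower bound on $\limsup_{t\downarrow 0}\tfrac1t(\ent{\mm}(\mu_t)-\ent{\mm}(\mu_0))$, and $-\int_\Omega\D\rho(\nabla\varphi)\,\d\mm$ is precisely such a lower bound. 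Substituting gives
\[
\liminf_{t\downarrow 0}\frac{U_N(\mu_t)-U_N(\mu_0)}{t}\le \frac1N U_N(\mu_0)\int_\Omega \D\rho(\nabla\varphi)\,\d\mm,
\]
which is \eqref{1:lm4}.

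I expect the main obstacle to be the rigorous justification of the entropy-derivative estimate in the non-smooth framework: one must ensure that $\mu_t\ll\mm$ for $t$ small (using the $\cdkn$ displacement-convexity to propagate absolute continuity from $\mu_0$), control the densities $\rho_t$ well enough that $\rho_t\ln\rho_t$ is integrable and its difference quotient in $t$ can be passed to the limit, and correctly identify the first-variation term as $\int_\Omega\D\rho(\nabla\varphi)\,\d\mm$ rather than something involving the full Laplacian of $\varphi$ (which would require extra regularity). This is where Gigli's argument in \cite[Proposition 5.10]{G-O} does the heavy lifting: it exploits that $\rho$ is Lipschitz and bounded below to write the relevant difference quotient as $\int \rho\,\tfrac{\ln\rho_t\circ e_t - \ln\rho}{t}\,\d\pi$ (roughly), dominate it, and use that the geodesic's initial velocity is $-\nabla\varphi$ together with the chain rule for $\D(\ln\rho)(\nabla\varphi)=\tfrac1\rho\D\rho(\nabla\varphi)$. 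A secondary technical point is that $\varphi$ need only be a Kantorovich potential (not necessarily smooth or even in $W^{1,2}$ globally), so one works locally on $\Omega$ where the Lipschitz lower bound on $\rho$ and the boundedness of $\Omega$ guarantee $\varphi$ is Lipschitz on the relevant set and $\D\rho(\nabla\varphi)$ is well-defined and integrable.
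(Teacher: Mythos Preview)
Your overall strategy matches the paper's---both reduce to Gigli's differentiation result and then compose with $x\mapsto e^{-x/N}$---but your exposition has a sign inconsistency and misses the simplification that makes the argument clean. You first claim $\limsup_{t\downarrow 0}\tfrac1t\big(\ent{\mm}(\mu_t)-\ent{\mm}(\mu_0)\big)\le -\int_\Omega \D\rho(\nabla\varphi)\,\d\mm$, then (correctly) observe that what is actually needed is the \emph{opposite} direction, a lower bound on this quantity; only the lower bound yields \eqref{1:lm4} after applying the decreasing map $e^{-x/N}$. Make sure you are proving the right inequality.

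More substantively, your worries about establishing $\mu_t\ll\mm$, controlling $\rho_t$, and passing to the limit in $\int(\rho_t\ln\rho_t-\rho\ln\rho)\,\d\mm$ are red herrings that the paper's argument avoids entirely. The paper uses two pointwise convexity inequalities: concavity of $e^{-x}$ gives $U_N(\nu)-U_N(\mu_0)\le -\tfrac1N U_N(\mu_0)\big(\ent{\mm}(\nu)-\ent{\mm}(\mu_0)\big)$, and convexity of $x\ln x$ gives $\ent{\mm}(\nu)-\ent{\mm}(\mu_0)\ge\int_\Omega\ln\rho\,\d(\nu-\mu_0)$, both valid for \emph{every} $\nu\ll\mm$ (no geodesic structure needed). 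Combining and specializing to $\nu=\mu_t$ reduces the problem to differentiating the \emph{linear} functional $\mu_t\mapsto\int\ln\rho\,\d\mu_t=\int\ln\rho\circ e_t\,\d\pi$, which involves only the fixed bounded Lipschitz function $\ln\rho$ evaluated along geodesics---the time-$t$ density $\rho_t$ never appears. The required lower bound $\liminf_{t\downarrow 0}\int\tfrac{\ln\rho\circ e_t-\ln\rho\circ e_0}{t}\,\d\pi\ge -\int_\Omega\D\rho(\nabla\varphi)\,\d\mm$ is then exactly \cite[Proposition~5.9]{G-O}. Your tentative expression ``$\int\rho\,\tfrac{\ln\rho_t\circ e_t-\ln\rho}{t}\,\d\pi$'' should therefore read $\int\tfrac{\ln\rho\circ e_t-\ln\rho\circ e_0}{t}\,\d\pi$.
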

\begin{proof}

We work on the space $(\overline \Omega, \dist, \mm \llcorner_{\Omega})$.      For any $\nu\in \mathcal P(\overline \Omega)$ with $\nu\ll \mm\llcorner_{\Omega}$, the concavity of the function  $e^{-x}$ gives
\[
U_N(\nu)- U_N(\mu_0)\leq  -U_N(\mu_0)\left(\frac 1N \ent{\mm}(\nu)-\frac 1N \ent{\mm}(\mu_0)\right).
\]
Since $\rho, \rho^{-1}$ are positive and bounded,  the function $\ln \rho: \overline \Omega \to \R$ is bounded. Thus
the convexity of $x\ln x$ gives
\[
\ent{\mm}(\nu)- \ent{\mm}(\mu_0) \geq \int_\Omega \ln (\rho)\Big(\frac{\d \nu}{\d \mm}-\rho\Big)\,\d \mm, \qquad \forall \nu \in \mathcal P(\overline \Omega), \nu\ll\mm.
\]
Then 
\[
U_N(\nu)- U_N(\mu_0)\leq  -\frac 1N U_N(\mu_0) \int_\Omega \ln (\rho)\Big(\frac{\d \nu}{\d \mm}-\rho\Big)\,\d \mm.
\]
Plugging $\nu:=(e_t)_\sharp \pi$, dividing by $t$ and letting $t \downarrow 0$ we get
\[
\lmti{t}{0} \frac{U_N(\nu)- U_N(\mu_0)}t \leq -\frac 1N U_N(\mu_0)  \lmti{t}{0}  \int_\Omega  \frac {\ln \rho \circ e_t-\ln\rho \circ e_0}t\,\d \pi.
\]

 Applying  \cite[Proposition 5.9]{G-O},  we get
\[
  \lmti{t}{0}  \int_\Omega  \frac {\ln \rho \circ e_t-\ln\rho \circ e_0}t\,\d \pi \geq -\int_\Omega \D \rho(\nabla \varphi)\,\d \mm.
\]
Combining the estimates above we complete the proof.

\end{proof}
\medskip

Next  we will prove a  \emph{functional version} of the ABP estimate.

\begin{theorem}[A functional ABP estimate]\label{lemmaEVI}
Let $\ms$ be an    $\rcdkn$ metric measure space. Assume that $\mm(\partial \Omega) = 0$.
Let $\mu_0\in \mathcal P_2(\overline \Omega)$ be  with non-negative density $\rho\in \Lip(\Omega, \dist)$, $\mu_1\in \mathcal P_2(X)$ be  with bounded support. Then it holds
\begin{equation}\label{1:lm5}
U_N(\mu_1)\leq \left(c_{K/N}\big(W_2(\mu_0, \mu_1)\big) -\frac{s_{K/N}\big(W_2(\mu_0, \mu_1)\big)}{NW_2(\mu_0, \mu_1)}\left( \int_\Omega  \rho\,\d {\bf \Delta} \varphi\right)\right)U_N(\mu_0)  
\end{equation}
where   $\varphi$ is any Kantorovich potential from $\mu_0$ to $\mu_1$.
\end{theorem}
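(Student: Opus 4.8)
The plan is to combine the two preceding lemmas and then integrate by parts. Lemma~\ref{lemmakn} controls $U_N(\mu_1)$ in terms of $U_N(\mu_0)$ and the one-sided derivative $\frac{\d^-}{\d t}\restr{t=0}U_N(\mu_t)$ along a suitable Wasserstein geodesic, while Lemma~\ref{lemmadr} bounds that derivative from above by $\frac1N U_N(\mu_0)\int_\Omega\D\rho(\nabla\varphi)\,\d\mm$; rewriting $\int_\Omega\D\rho(\nabla\varphi)\,\d\mm$ as $-\int_\Omega\rho\,\d{\bf\Delta}\varphi$ then gives exactly \eqref{1:lm5}. I would first carry this out under the extra hypothesis of Lemma~\ref{lemmadr} --- that $\rho$ is Lipschitz and bounded below on $\Omega$ by a positive constant --- and remove it afterwards by approximation.

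\emph{Combining the lemmas.} We may assume $W_2(\mu_0,\mu_1)>0$, the case $\mu_0=\mu_1$ being trivial. By Bonnet--Myers, $W_2(\mu_0,\mu_1)\le\diam(X)<\pi\sqrt{N/K}$ when $K>0$, so in all cases $s_{K/N}(W_2(\mu_0,\mu_1))>0$ and the factor $s_{K/N}(W_2(\mu_0,\mu_1))/W_2(\mu_0,\mu_1)$ is strictly positive. Let $(\mu_t)_{t\in[0,1]}$ be the geodesic supplied by Lemma~\ref{lemmakn}, lift it to a $\pi\in\Opt(\mu_0,\mu_1)$ with $(e_t)_\sharp\pi=\mu_t$, and fix a Kantorovich potential $\varphi$ from $\mu_0$ to $\mu_1$. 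Applying Lemma~\ref{lemmadr} to this $\pi$ and inserting the bound into \eqref{1:lm3} --- legitimate because that factor is positive and $U_N(\mu_0)>0$ --- gives
\[
U_N(\mu_1)\le c_{K/N}\big(W_2(\mu_0,\mu_1)\big)U_N(\mu_0)+\frac{s_{K/N}\big(W_2(\mu_0,\mu_1)\big)}{N\,W_2(\mu_0,\mu_1)}\,U_N(\mu_0)\int_\Omega\D\rho(\nabla\varphi)\,\d\mm.
\]

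\emph{Integration by parts.} Being $c$-concave, $\varphi$ is Lipschitz on $\overline\Omega$, and the Laplacian comparison theorem \cite[Theorem~5.14]{G-O} gives $\varphi\in\D({\bf\Delta},\Omega)$ with $({\bf\Delta}\varphi)^+$ absolutely continuous of bounded density; in particular $\int_\Omega\rho\,\d{\bf\Delta}\varphi\in[-\infty,+\infty)$ is well defined, and \eqref{1:lm5} is trivially true when it equals $-\infty$. When it is finite I would obtain $\int_\Omega\D\rho(\nabla\varphi)\,\d\mm=-\int_\Omega\rho\,\d{\bf\Delta}\varphi$ from Definition~\ref{D:Laplace} by testing against $\rho\chi_n$ with $\chi_n\in\Lip_c(\Omega,\dist)$, $0\le\chi_n\uparrow1$: on the left one uses the Leibniz rule, $\mm(\partial\Omega)=0$, the Lipschitz bound on $\varphi$ and dominated convergence to kill the cut-off term; on the right one applies monotone convergence to $({\bf\Delta}\varphi)^+$ and $({\bf\Delta}\varphi)^-$ separately. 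Substituting into the display above yields \eqref{1:lm5} under the extra hypothesis on $\rho$.

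\emph{Removing the positivity assumption, and the obstacle.} For a general non-negative $\rho\in\Lip(\Omega,\dist)$, set $\rho_\eps:=\max(\rho,\eps)\big/\big\|\max(\rho,\eps)\big\|_{L^1(\mm\llcorner_{\Omega})}$ and $\mu_0^\eps:=\rho_\eps\,\mm\llcorner_{\Omega}$; each $\rho_\eps$ is Lipschitz and bounded below by a positive constant, $\mu_0^\eps\to\mu_0$ in $W_2$, and $U_N(\mu_0^\eps)\to U_N(\mu_0)$. Applying the previous steps to $\mu_0^\eps$ and passing to the limit: after normalization the Kantorovich potentials $\varphi_\eps$ from $\mu_0^\eps$ to $\mu_1$ are locally equi-Lipschitz and locally equi-bounded, uniformly in $\eps$, so along a subsequence $\varphi_\eps\to\varphi$ locally uniformly for some Kantorovich potential $\varphi$ from $\mu_0$ to $\mu_1$, and the uniform Laplacian-comparison bound on $({\bf\Delta}\varphi_\eps)^+$ allows passage to the limit $\int_\Omega\rho_\eps\,\d{\bf\Delta}\varphi_\eps\to\int_\Omega\rho\,\d{\bf\Delta}\varphi$. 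This proves \eqref{1:lm5} for that $\varphi$; finally, any two Kantorovich potentials from $\mu_0$ to $\mu_1$ have the same weak gradient $\mm$-a.e.\ on $\{\rho>0\}$ and hence differ by a locally constant function there, so ${\bf\Delta}\varphi$ agrees on the open set $\{\rho>0\}$ for all of them, and since $\rho$ is continuous and vanishes off $\{\rho>0\}$ the value $\int_\Omega\rho\,\d{\bf\Delta}\varphi$ is independent of the choice of $\varphi$, giving the statement as phrased. I expect the main difficulty to be precisely this final step --- the joint stability of the Kantorovich potentials and of their measure-valued Laplacians under $\mu_0^\eps\to\mu_0$ --- together with the care in the integration by parts needed to ensure that no boundary term survives when $\rho$ does not vanish near $\partial\Omega$.
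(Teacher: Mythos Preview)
Your overall plan --- combine Lemmas~\ref{lemmakn} and~\ref{lemmadr}, integrate by parts, then approximate --- is the same as the paper's, and the two difficulties you flag at the end are exactly the right ones. However, as written they are genuine gaps, not just technicalities. For the integration by parts: testing against $\rho\chi_n$ with $\chi_n\in\Lip_c(\Omega)$, $\chi_n\uparrow 1$, produces the cross term $\int_\Omega\rho\,\D\chi_n(\nabla\varphi)\,\d\mm$, which is only bounded by $\|\rho\|_\infty\Lip(\varphi)\int|\D\chi_n|\,\d\mm$; the hypothesis $\mm(\partial\Omega)=0$ does not force $\int|\D\chi_n|\,\d\mm\to 0$, so ``dominated convergence'' does not kill this term when $\rho$ fails to vanish near $\partial\Omega$. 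For the stability step: you fix $\mu_1$ and vary $\mu_0^\eps$, which forces varying potentials $\varphi_\eps$; passing to the limit in $\int\rho_\eps\,\d{\bf\Delta}\varphi_\eps$ then requires control of the singular parts $({\bf\Delta}\varphi_\eps)^-$ uniformly in $\eps$, and your closing argument that two Kantorovich potentials differ by a locally constant function on $\{\rho>0\}$ does not by itself imply that their \emph{measure-valued} Laplacians coincide there.

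The paper dissolves both issues by reorganizing the approximation. First, it fixes the Kantorovich potential $\varphi$ once and for all and uses the fundamental theorem of optimal transport to build an ``exponential map'' $T_\varphi$: for any $\mu\ll\mm$ on $\Omega$, the same $\varphi$ is a Kantorovich potential from $\mu$ to $T_\varphi(\mu):=(\nabla\varphi)_\sharp\mu$. Second, it approximates $\rho$ from below by $\zeta_m\in\Lip_c(\Omega)$, $0\le\zeta_m\uparrow\rho$, and then adds a constant $1/n$ to obtain the positive lower bound Lemma~\ref{lemmadr} needs. The point is that $\D(\zeta_m+1/n)=\D\zeta_m$ is \emph{compactly supported}, so
\[
\int_\Omega\D(\zeta_m+\tfrac1n)(\nabla\varphi)\,\d\mm=\int_\Omega\D\zeta_m(\nabla\varphi)\,\d\mm=-\int_\Omega\zeta_m\,\d{\bf\Delta}\varphi
\]
directly from Definition~\ref{D:Laplace}, with no cut-off and no boundary term. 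The two lemmas are applied to the pair $\big(C_{m,n}(\zeta_m+\tfrac1n)\mm\llcorner_\Omega,\;T_\varphi(C_{m,n}(\zeta_m+\tfrac1n)\mm\llcorner_\Omega)\big)$, always with the \emph{same} $\varphi$; one lets $n\to\infty$, then $m\to\infty$, recovering $(\mu_0,\mu_1)$ by stability of optimal transport and passing $\int\zeta_m\,\d{\bf\Delta}\varphi\to\int\rho\,\d{\bf\Delta}\varphi$ by monotone convergence. Thus no stability of potentials is ever needed, and the integration by parts is always against a compactly supported test function.
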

\begin{proof}
 {\bf Step 1.} Exponential map:

   Let $\varphi$ be  a Kantorovich potential from $\mu_0$ to $\mu_1$,  which is a $c$-concave function. Without loss of generality, we may assume that $\varphi$ is a real-valued function on $\Omega$.
  Consider the multivalued map $  \Omega\ni x\mapsto T_\varphi(x)\subset \pr(X)$ defined by: $$\nu\in T_\varphi(x) ~~\text{if and only if}~~\supp \nu\subset\partial^c  \varphi(x).$$ By  measurable selection theorem (see e.g. \cite[Theorem 6.9.3]{B-M} and  \cite[proof of THEOREM 5.14, page 71]{G-O}), there exists a measurable map $x\mapsto \eta_x$ such that $\eta_x\in T_\varphi(x)$ for any $x\in   \Omega$.
  
   For any $\mu\in \mathcal P(X)$ with $\mu\llcorner_\Omega \ll \mm\llcorner_\Omega$,   define a probability measure $T_\varphi(\mu)$ by 
   \begin{equation}\label{eq1:br}
      T_\varphi(\mu):=\int \eta_x\,\d \mu(x).
   \end{equation}
   By the fundamental theorem of optimal transport (cf. Theorem \ref{th:ftot}), $ \varphi$ is a Kantorovich potential from $\mu$ to $T_\varphi(\mu)$. By \cite{RS-N} (see also \cite{GRS-O}) the Monge's problem is uniquely solvable on $\rcdkn$ spaces,  and up to  an additive constant,  $ \varphi$ is the unique Kantorovich potential from $\mu$ to $T_\varphi(\mu)$. In this case,   for $\mm$-a.e. $x\in \Omega$, there  is a $y_x\in \partial^c \varphi(x)$.  
Then we can define a map $\nabla \varphi:\Omega \to \partial^c \varphi$   by $\nabla \varphi(x)=y_x$ such that 
   \begin{equation}\label{eq2:br}
T_\varphi(\mu)=(\nabla \varphi)_\sharp \mu~~~\forall \mu\ll \mm~\text{on}~\Omega.
   \end{equation}

\medskip

 {\bf Step 2.} Entropy estimate:
 
Firstly,  let  $(\zeta_m)_{m\in \N} \subset \Lip_c(\Omega, \dist)$ be a sequence of non-negative functions,  such that $\zeta_m \uparrow \rho$ pointwisely.
Denote $$\nu_{m, n, 0}=C_{m,n} \left(\zeta_m+\frac 1n\right)\,\mm\llcorner_\Omega,~~~\zeta_{m,n, 0}=C_{m,n} \left(\zeta_m+\frac 1n\right)$$
where $C_{m,n}$ are normalizing constants such that $\nu_{m,n, 0}\in \mathcal P(\Omega)$. By Step 1, there is a (unique) probability measure  $\nu_{m, n, 1}=T_\varphi(\nu_{m,n, 0})$ so that  $\varphi$ is a Kantorovich potential from $\nu_{m,n, 0}$ to $\nu_{m, n,1}$. By Lemma \ref{lemmakn} and Lemma \ref{lemmadr} we get 
\begin{equation}\label{2:lm5}
\frac{U_N(\nu_{m,n, 1})}{U_N(\nu_{m,n,0})}\leq c_{K/N}\big(W_2(\nu_{m, n,0}, \nu_{m,n,1})\big) +\frac{s_{K/N}\big(W_2(\nu_{m,n,0}, \nu_{m,n,1})\big)}{NW_2(\nu_{m,n,0}, \nu_{m,n,1})}\left( \int_\Omega \D \zeta_{m,n, 0}(\nabla \varphi)\,\d \mm\right).
\end{equation}

Denote $\nu_{m,  0}=C_{m} \zeta_m \mm,  \nu_{m, 1}=T_\varphi(\nu_{m,  0}) \in \mathcal P(\Omega)$.
By \eqref{eq1:br}, \eqref{eq2:br} and monotone convergence theorem, we can see that $\nu_{m,n, i} \overset{W_2}\to \nu_{m, i}$ and $U_N(\nu_{m, n, i})  \to U_N(\nu_{m, i}) $ for $i=1,2$,  as $n\to \infty$.
By locality of the weak gradient, we also have
\begin{equation}\label{eq:ibp}
\lmt{n}{\infty}\int_\Omega \D \zeta_{m,n, 0}(\nabla \varphi)\,\d \mm= C_m\int_\Omega \D \zeta_m(\nabla \varphi)\,\d \mm=-C_m\int_\Omega  \zeta_m\,\d {\bf \Delta} \varphi.
\end{equation}

Letting $n\to \infty$ in \eqref{2:lm5} and combining with \eqref{eq:ibp} we get
\begin{equation}\label{eq3:br}
\frac{U_N(\nu_{m,1})}{U_N(\nu_{m,0}) }\leq c_{K/N}\big(W_2(\nu_{m,0}, \nu_{m,1})\big) -C_m\frac{s_{K/N}\big(W_2(\nu_{m,0}, \nu_{m,1})\big)}{NW_2(\nu_{m,0}, \nu_{m,1})}\left( \int_\Omega  \zeta_m\,\d {\bf \Delta} \varphi\right)
\end{equation}

By \eqref{eq1:br} and \eqref{eq2:br} again, we can see that $\nu_{m, i} \overset{W_2}\to \mu_{i}$ and $U_N(\nu_{m,  i})  \to U_N(\mu_{i}) $ for $i=1,2$,  as $m\to \infty$.
Letting $m\to \infty$ in \eqref{eq3:br} and noticing that $\lmt{m}{\infty} C_m =1$,  we get
 \eqref{1:lm5}.
\end{proof}

\subsection{Main results}

\subsubsection*{Proof of the main theorem}

\begin{proof}[Proof of Theorem \ref{th2}]

The proof is divided  into four steps. Without loss of generality, we  assume that  $\mm(\De)>0$.
 
\medskip

 {\bf Step 1.}

For any  $c$-concave function $\phi$, from the proof of Theorem \ref{lemmaEVI},   for $\mm$-a.e. $x\in \Omega$, there  is a unique geodesic $\gamma^x$ such that $\gamma^x_0=x$, $\gamma^x_1\in \partial^c \phi(x)$.  
Then we can define a map $\nabla \phi:\Omega \to X$   by $\nabla \phi(x)=\gamma^x_1$ such that  for any $\mu\ll \mm$,  $ \phi$ is a Kantorovich potential from $\mu$ to
$
T_\phi(\mu):=(\nabla \phi)_\sharp \mu$.
 In addition,  we define maps 
$\nabla \phi_t(x):=\gamma^x_t, t\in (0, 1)$ such that $((\nabla \phi_t)_\sharp \mu)_{t\in [0,1]}$ is the unique geodesic from $\mu$ to  $T_\phi(\mu)$ in the Wasserstein space.

\medskip

{\bf Step 2.}

 Let   $\varphi$ be  a $c$-concave representative of $-tu$ given in Lemma \ref{lemma:gt2} (or Lemma \ref{lemma:gt3} respectively).
By Lemma \ref{lemma:gt2} and the compactness of $\De$, we have
  \begin{equation}\label{eq1:th2}
  \De \subset \Big\{y\in \partial^c \varphi(x): x\in \Rem_2 \Big\}~\text{and}~\Rem_2 \subset \Big\{x\in \partial^c \varphi^c(y): y\in \De \Big\}. 
\end{equation} 
By Lemma \ref{lemma:gt}, Lemma \ref{lemma:gt3} (and the assumption of the theorem), we also have
  \begin{equation}\label{eq1.1:th2}
  \De \subset \Big\{y\in \partial^c \varphi(x): x\in \Rem_1 \Big\}~\text{and}~\Rem_1 \subset \Big\{x\in \partial^c \varphi^c(y): y\in \De \Big\}. 
\end{equation}

Let   $\nu:=\frac 1{\mm(\De)}\mm\llcorner_{\De}\in \mathcal P_2(X)$.  By Step 1,  there is a (unique) measure $\mu\in \pr(\Rem_i)$ such that: $\varphi^c$ is a Kantorovich potential from $\nu$ to $\mu$, and $\varphi$ is a Kantorovich potential from $\mu$ to $\nu$.   By \cite[Theorem 4.3, Step 4]{mondino2022lipschitz},   $\mu\ll\mm$ and has  bounded density $\rho$.   So we have $\nu=T_\varphi (\mu)$. 
   
    \medskip
 
   Denote by ${\rm M}_{ \Omega}\subset \pr(\Omega)$ the space of all probability measures  on  $ \Omega$  with Lipschitz density  bounded from below by a positive constant.

{\bf Claim:} Let  ${\K}\subset \Rem_i \subset \Omega$  be a compact set such that $\rho$ is bounded from  below by a positive constant on ${\K}$.  Let $c_{\K}$ be the normalizing constant so that $\mu_{\K}:=c_{\K} \mu\llcorner_{\K}\in \pr(\Omega)$.  We can find a sequence $(\mu_{n})_{n\in \N}\subset {\rm M}_{\Omega}$   such  that
\begin{itemize}
\item [(1)]  $\mu_n=\rho_n \,\mm$ with uniformly bounded $\rho_n\in \Lip_c(\Omega, \dist)$;
\item [(2)]  $\lmt{n}{\infty} \mu_n ( {\K})= 1$ and 
$$\lmt{n}{\infty}W_2(\mu_{\K}, \mu_{n})=\lmt{n}{\infty} {\rm Ent}_{\mu_{\K}}(\mu_n\llcorner_{{\K}})=0.$$
\item  [(3)] for  $\nu={\bf \Delta} \varphi\llcorner_{\Omega\setminus \Rem_i}$ it holds 
\begin{equation}\label{eq:conv}
\lmt{n}{\infty}\int\rho_n\,\d \nu=0
\end{equation}
\end{itemize}
 
\textit{Proof of the claim:}

Let $(H_t \mu_{\K})_{t>0}$ be  the gradient flow of the relative entropy ${\rm Ent}_{\mm}$  from $\mu_{\K}$,   in the $2$-Wasserstein space. 
It is known that $H_t \mu_{\K}\ll \mm$ for  $t>0$ and they have continuous, uniformly  bounded densities.  Denote $\mu_{\K}=\rho_{\K}\,\mm=c_{\K}\rho\chi_{\K}\,\mm$.  By \cite{AGS-C} we also know the  heat flow $P_t \rho_\K$ from $\rho_{\K}$ coincides with the density of $H_t \mu_{\K}$,  so  we can write $H_t \mu_{\K}=P_t \rho_{\K} \,\mm$. By \cite[PROPOSITION 6.4]{AGS-M} we also know 
$P_t \rho_\K$ is Lipschitz.

Furthermore, it is known that $\lmt{t}{0} W_2(H_t \mu_{\K}, \mu_{\K})=0$,  $\lmt{t}{0} {\rm Ent}_{\mm}(H_t \mu_{\K})={\rm Ent}_{\mm}( \mu_{\K})$, and $P_t \chi_{{\K}}$ converge to $\chi_{{\K}}$ in $L^2$.  So for  $\tilde \mm:=\frac 1{\mm({\K})} \mm\llcorner_{{\K}}$, we have
\begin{eqnarray*}
\lmt{t}{0}\int \chi_{\K}\,\d H_t \tilde \mm=\lmt{t}{0}\int P_t\chi_{\K}\,\d \tilde \mm=
  \int  \chi_{{\K}} \,\d \tilde \mm=1.
\end{eqnarray*}
Thus $\lmt{t}{0} H_t \tilde \mm({\K})= 1$ and $\lmt{t}{0} H_t \tilde \mm({X\setminus \K})= 0$. Recall that $\rho_{\K}$ is bounded, by maximal principle of the heat flow we have 
\begin{equation}\label{eq:error}
\lmt{t}{0} H_t \mu_{\K}(X \setminus {\K})= 0~~\text{and}~~\lmt{t}{0} H_t \mu_{\K}( {\K})= 1.
\end{equation}

By direct computation we have
 \begin{eqnarray*}
 &&{\rm Ent}_{\mu_{\K}} \big((H_t\mu_{\K})\llcorner_{{\K}}\big)\\&=&\int_{{\K}} \ln (P_t \rho_{\K}/\rho_{\K})\,\d H_t \mu_{\K}\\
 &=&\int_{{\K}} \ln (P_t \rho_{\K})\,\d H_t \mu_{\K}-\int_{\K} \ln \rho_{\K}\,\d H_t\mu_{\K}\\
 &=&\underbrace{\int_{X} \ln (P_t \rho_{\K})\,\d H_t \mu_{\K}}_{{\rm Ent}_{\mm} (H_t\mu_{\K})}-\int_{X\setminus {\K}} \ln (P_t \rho_{\K})\,\d H_t \mu_{\K}-\int_{\K} \ln \rho_{\K}\,\d H_t\mu_{\K}.
 \end{eqnarray*}
 By \eqref{eq:error} and Jensen's inequality we know 
 $$\lmt{t}{0}\int_{X\setminus {\K}} \ln (P_t \rho_{\K})\,\d H_t \mu_{\K}=0.$$
  By choice of $\K$,  $\ln \rho_{\K}$ is bounded.  Then we have 
   $$\lmt{t}{0}\int_{\K} \ln \rho_{\K}\,\d H_t\mu_{\K}=\lmt{t}{0}\int_{\K} P_t \rho_{\K} \ln \rho_{\K} \,\d \mm={\rm Ent}_{\mm} (\mu_{\K}).$$
Combining with the continuity of the  entropy along the heat flow,    we get $$\lmt{t}{0}{\rm Ent}_{\mu_{\K}} \big((H_t\mu_{\K})\llcorner_{{\K}}\big)=0.$$

At last, for any $n\in \N$,  there is a compact set $E_n\subset \Omega\setminus \Rem_i$ so that 
\begin{equation}\label{eq1:err}
\|\rho_\K \|_{L^\infty}\nu (\Omega \setminus E_n)<\frac 1{2n}.
\end{equation} 
Denote $\nu_n:=\nu\llcorner_{E_n}/ \nu(E_n)$.  Note that  $\lmt{t}{0} W_2(H_t \nu_n, \nu_n)=0$ and $\dist(\K, E_n)>0$, it holds
$$
\lmt{t}{0} \int_{E_n}  P_t \rho_{\K}\,\d \nu=\lmt{t}{0} \nu(E_n) \int  \rho_{\K}\,\d H_t \nu_n=0.
$$
So there is $t_n>0$ so that 
\begin{equation}\label{eq2:err}
\int_{E_n}  P_{t_n} \rho_{\K}\,\d \nu<\frac1{2n}.
\end{equation}
Combining  \eqref{eq1:err} and  \eqref{eq2:err} we get
\begin{eqnarray*}
\left|\int P_{t_n} \rho_{\K}\,\d \nu \right| &=& \left| \int_{E_n} P_{t_n} \rho_{\K}\,\d \nu+\int_{\Omega \setminus {E_n}} P_{t_n} \rho_{\K}\,\d \nu \right|\\
& \leq & \frac 1{2n}+ \frac 1{2n}=\frac 1n.
\end{eqnarray*}

For any $n\in \N$,  we define $$\mu_n= a_n\left({(H_{t_n}\mu_{\K})\llcorner_{\Omega}}+\frac 1n\right)$$  where $a_n$ is the normalizing constant.
From the construction above we can see that $\mu_n$ fulfils our request.

 \medskip

{\bf Step 3.}
Given a compact set ${\K}\subset \Rem_i$ and a sequence $(\mu_n)_{n\in \N}\subset {\rm M}_{ \Omega}$  constructed in the last step.
For any $t\in (0,1)$, denote $\mu_{n, t}=(\nabla \varphi_t)_\sharp \mu_n$. 
By local compactness of $\ms$, we may assume that $\mu_{n, t} \weakto  \mu_t$ for some $ \mu_t\in \pr (\Omega)$ as $n\to \infty$. By stability of optimal transport (cf. \cite[Theorem 5.20]{V-O}) and the uniqueness of optimal transport map again, we know  $ \mu_t=(\nabla \varphi_t)_\sharp \mu_{\K}$ is the unique $t$-intermediate point between $\mu_{\K}$ and a uniform distribution $ T_\varphi(\mu_{\K})=\frac 1{\mm(\De_{\K})}\mm\llcorner_{\De_{\K}}$ on some measurable set $\De_{\K}\subset \De$.    
By $\rcdkn$ condition \cite{R-I} and  \cite[Theorem 4.3]{mondino2022lipschitz}, we also know $\mu_{n, t}, \mu_t\ll \mm$,  and  for any  $t>0$, $ (\mu_{n,t})_{n\in \N}$ have  uniformly  bounded densities $\rho_{n, t}$.

 Furthermore, the density of $\mu_t$ satisfies
\begin{equation}\label{eq0:ent}
\|\rho_t \|_{L^\infty} \leq \frac 1{\mm(\De_{\K})}+o(1),~~\text{as}~t\to 0.
\end{equation}
Given $\epsilon >0$,  by \eqref{eq0:ent} we have
\begin{equation}\label{eq1:ent}
\|\rho_t \|_{L^\infty} \leq \frac 1{\mm(\De_{\K})}+\epsilon
\end{equation}
for $t\in (0,1)$  close enough to 1.

Next we  will estimate ${\rm Ent}_{\mm}(\mu_{n, t})$.   Denote $\mu_{t}=\rho_{t}\,\mm,  \mu_{n, t}=\rho_{n, t}\,\mm$,  and $\tilde\mu_{n}=c_n \mu_{n}\llcorner{{\K}} \in \mathcal P(\Omega)$ for some  normalizing constants $c_n$, $n\in \N$.   We have
\begin{eqnarray*}
&&{\rm Ent}_{\mm}(\mu_{n, t})\\&=&\int_{\nabla \varphi_t(\Omega)} \ln \rho_{n, t}\,\d \mu_{n, t}\\
&=&\int_{\nabla \varphi_t({\K})} \ln \rho_{n, t}\,\d \mu_{n, t}+\int_{\nabla \varphi_t(\Omega\setminus {\K})} \ln \rho_{n, t}\,\d \mu_{n, t}
\end{eqnarray*}
and
\begin{eqnarray*}
&&{\rm Ent}_{(\nabla \varphi_t)_\sharp( \mu_{\K})}\big((\nabla \varphi_t)_\sharp(\tilde \mu_n)\big)\\
&=&\int_{\nabla \varphi_t({\K})} \ln (c_n\rho_{n, t})\,\d (c_n\mu_{n, t})-\int_{\nabla \varphi_t({\K})} \ln \rho_{t}\,\d (c_n\mu_{n, t})\\
&=&{c_n\int_{\nabla \varphi_t({\K})} \ln \rho_{n, t}\,\d \mu_{n, t}}+\underbrace{\int_{\nabla \varphi_t({\K})} c_n\ln c_n\,\d \mu_{n, t}}_{=\ln c_n}-\int_{\nabla \varphi_t({\K})} \ln \rho_{t}\,\d (c_n\mu_{n, t}).
\end{eqnarray*}
Combining  with \eqref{eq1:ent},  we get
\begin{eqnarray*}
&&{\rm Ent}_{\mm}(\mu_{n, t})\\&=&\frac 1{c_n} {\rm Ent}_{(\nabla \varphi_t)_\sharp( \mu_\K)}\big((\nabla \varphi_t)_\sharp(\tilde \mu_n)\big)-\frac{\ln c_n}{c_n}+\frac 1{c_n} \int_{\nabla \varphi_t({\K})} \ln \rho_{t}\,\d (c_n\mu_{n, t})\\&&+\int_{\nabla \varphi_t(\Omega\setminus {\K})} \ln \rho_{n, t}\,\d \mu_{n, t}\\
&\leq&  \frac 1{c_n} {\rm Ent}_{(\nabla \varphi_t)_\sharp( \mu)}\big((\nabla \varphi_t)_\sharp(\tilde \mu_n)\big)-\frac{\ln c_n}{c_n}+\frac 1{c_n} \ln \big(1/\mm(\De_{\K})+\epsilon\big)\\&&+ \ln \|\rho_{n, t}\|_{L^\infty} \mu_{n}(\Omega\setminus \Rem_i).
\end{eqnarray*}

By \cite[Lemma 9.4.5]{AGS-G} we have
\begin{equation}\label{eq:reverseent}
0\leq {\rm Ent}_{(\nabla \varphi_t)_\sharp( \mu_\K)}((\nabla \varphi_t)_\sharp(\tilde \mu_n))\leq {\rm Ent}_{\mu_\K}(\tilde\mu_{n}).
\end{equation}

Combining these estimates above, the properties (1) and (2) in {Step 2},  and  $\lmt{n}{\infty}c_n=1$,  we obtain
\begin{equation}\label{eq:entconv}
\lmt{n}{\infty}{\rm Ent}_{\mm}(\mu_{n, t}) \leq \ln \big(1/\mm(\De_{\K})+\epsilon\big)
\end{equation}
for $t\in (0,1)$  close enough to 1.

 \medskip
 
{\bf Step 4.}

Applying  Proposition \ref{lemmaEVI}  (with $\mu_1=\mu_{n, t},  \mu_0=\mu_{n}$ and a re-parametrization) we obtain
\begin{eqnarray*}
U_N(\mu_{n, t})&\leq& c_{K/N}\big(W_2(\mu_{n}, \mu_{n, t} )\big) U_N(\mu_{n})\\&&-t\frac{s_{K/N}\big(W_2(\mu_{n}, \mu_{n, t} )\big)}{W_2(\mu_{n}, \mu_{n, t} )}\left(\frac 1N U_N(\mu_{n})   \int \rho_n\,\d {\bf  \Delta \varphi }\right).
\end{eqnarray*}
Letting $n\to \infty$ and combining with  \eqref{eq:entconv},  upper semi-continuity of the functional $U_N(\cdot) $ (cf. \cite[Lemma 4.1]{S-O1}),  \eqref{eq:conv} and the Laplacian estimate in  Lemma \ref{lemma:comparison}, Lemma \ref{lemma:gt3},  we obtain
\begin{eqnarray*}
\frac{\big(1/\mm(\De_{\K})+\epsilon\big)^{-\frac 1N}}{U_N(\mu_{\K})}\leq c_{K/N}\big(W_2(\mu_{\K}, \mu_t)\big) +\frac{ts_{K/N}\big(W_2(\mu_{\K}, \mu_t)\big)}{NW_2(\mu_{\K}, \mu_t)}   \int  \rho_\K \Delta u \,\d \mm.
\end{eqnarray*}

By Jensen's inequality (cf. \cite[Lemma 4.1]{S-O1}), we get $$U_N(\mu_{\K}) \leq \mm(\K)^{\frac 1N}\leq \mm(\Rem_i)^{\frac 1N}.$$ Note  also that the function $\R^*\ni x\mapsto \frac {s_{K/N}(x)}x$ is non-increasing for $K\geq 0$ and increasing for $K<0$. Letting $t\to 1$, $\epsilon\to 0$ and $\mm(\Rem_i \setminus \K) \to 0$,  we get
\[
\mm(\De)\leq
\begin{cases}
\displaystyle \mm(\Rem_i)\left(c_{K/N}(\Theta)+\frac{ts_{K/N}(\Theta)}{N\Theta} \|(\Delta u)^+\|_{L^\infty(\overline \Omega)}\right)^N, & \textrm{if}\ K<0, \crcr
\displaystyle \mm(\Rem_i)\left(1+\frac tN \|(\Delta u)^+\|_{L^\infty(\overline \Omega)}\right)^N & \textrm{if}\ K =0,  \crcr
\displaystyle   \mm(\Rem_i)\left(c_{K/N}(\Phi)+\frac{ts_{K/N}(\Phi)}{N\Phi}  \|(\Delta u)^+\|_{L^\infty(\overline \Omega)}\right)^N & \textrm{if}\ K>0,
\end{cases}
\]
where $(\Delta u)^+$ denotes the positive part of $\Delta u$,  $\Theta:=\sup_{(x, y)\in \De \times \Omega} {\dist(x, y)}$ and $\Phi:=\inf_{(x, y)\in \De \times \Omega} {\dist(x, y)}$.  
\end{proof}
\subsubsection*{Applications}
In \cite{Gigli2023On, mondino2022lipschitz}, 
the  authors adopt a perturbation argument of \cite{ZhangZhu18},  in the spirit of the classical Jensen's maximum principle and ideas from Petrunin \cite{Petrunin96} and Zhang--Zhu  \cite{ZhangZhu12},  to study harmonic maps from RCD metric measure spaces to ${\rm CAT}(0)$ spaces.
By Theorem \ref{th2},  we  get the following estimate which plays a key role in their perturbation argument.


\begin{corollary}[cf. \cite{mondino2022lipschitz}, Theorem  4.3]\label{coro:mainperturbJensen}
Let $\ms$ be an   $\rcdkn$ metric measure space for some $K\in\R$ and $N\in (1, \infty)$. Let $\Omega\subset X$ be a bounded open domain with $\mm(\partial \Omega)=0$.   Assume that 
 $u\in \D({ \Delta}, \Omega) \cap C(\Omega)$ with
$
\Delta u\le L$
for some positive constant $L$.
Then, for any compact set $\De\subset X$ and for any $t>0$  so that 
\begin{equation*}
\Rem_2(\De,\Omega,u, t)\subset \Omega,
\end{equation*} 
or 
\[
  \Rem_1(\De, \Omega,  u, t) \subset \Omega,
\]
\[
\forall y\in \De,  ~\exists x\in  \Rem_1(\De, \Omega,  u, t), ~\dist(x, y)=t, ~\inf_{\overline \Omega}\big(u+\dist_y\big)=u(x)+\dist(x, y).
\]
It holds the following estimate:
\begin{equation}\label{eq:quantest}
\mm(\De)\le C(K,N, \De, \Omega,t,L)\, \mm\big( \Rem_i(\De, \Omega, u, t)\big)~~~i=1,2
\end{equation}
for some explicit constant
\[
C(K,N, \De, \Omega,t,L):=
\begin{cases}
\displaystyle \left(c_{K/N}(\Theta)+\frac{ts_{K/N}(\Theta)}{N\Theta} L\right)^N, & \textrm{if}\ K<0, \crcr
\displaystyle \left(1+\frac tN L\right)^N & \textrm{if}\ K =0,  \crcr
\displaystyle   \left(c_{K/N}(\Phi)+\frac{ts_{K/N}(\Phi)}{N\Phi}  L\right)^N & \textrm{if}\ K>0,
\end{cases}
\]
where $\Theta:=\sup_{(x, y)\in \De \times \Omega} {\dist(x, y)}$ and $\Phi:=\inf_{(x, y)\in \De \times \Omega} {\dist(x, y)}$.
In particular, if $\mm(\De)>0$, then $\mm\big( \Rem_i(\De, \Omega, u, t)\big)>0$.

\end{corollary}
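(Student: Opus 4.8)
The plan is to read Corollary \ref{coro:mainperturbJensen} off directly from Theorem \ref{th2}: it is nothing but the quantitative form of that estimate, with the one-sided bound $\Delta u\le L$ inserted in place of $\|(\Delta u)^+\|_{L^\infty(\overline\Omega)}$ and a non-degeneracy remark appended. Accordingly the argument consists of three short moves: (i) pass from $\Delta u\le L$ to the quantity $\|(\Delta u)^+\|_{L^\infty(\overline\Omega)}$ that actually enters Theorem \ref{th2}; (ii) use monotonicity of the right-hand side of Theorem \ref{th2} in that quantity to enlarge it to the explicit constant $C(K,N,\De,\Omega,t,L)$; (iii) extract the positivity statement.

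For (i), since $u\in\D(\Delta,\Omega)$ with $\Delta u\le L$ $\mm$-a.e.\ on $\Omega$, the positive part obeys $(\Delta u)^+\le L$ $\mm$-a.e., hence $\|(\Delta u)^+\|_{L^\infty(\overline\Omega)}\le L$. For (ii), with $\Theta=\sup_{(x,y)\in\De\times\Omega}\dist(x,y)$ and $\Phi=\inf_{(x,y)\in\De\times\Omega}\dist(x,y)$ as in Theorem \ref{th2}, the bound there has the shape $\mm(\Rem_i)\,(a+bL_0)^N$ where $L_0:=\|(\Delta u)^+\|_{L^\infty(\overline\Omega)}$, the coefficient $b$ is one of $\tfrac{ts_{K/N}(\Theta)}{N\Theta}$, $\tfrac1N$, $\tfrac{ts_{K/N}(\Phi)}{N\Phi}$ (all $\ge 0$, since $s_{K/N}\ge0$ on the relevant range), and the base $a$ is correspondingly $c_{K/N}(\Theta)$, $1$, or $c_{K/N}(\Phi)$. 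Since $\De$ is compact and $\Omega$ bounded we have $\Theta<\infty$ and $0\le\Phi\le\Theta$, and when $K>0$ the Bonnet--Myers diameter bound for $\rcdkn$ spaces keeps $\Theta$ (hence $\Phi$) inside the domain of $s_{K/N}$; so $a+bx$ is non-decreasing in $x$ and, on the range relevant here, non-negative. Replacing $L_0$ by the larger value $L$ in Theorem \ref{th2} therefore yields $\mm(\De)\le C(K,N,\De,\Omega,t,L)\,\mm(\Rem_i(\De,\Omega,u,t))$ for $i=1,2$ with $C$ the stated finite constant; this gives (iii) at once, for if $\mm(\De)>0$ then, $C$ being finite, necessarily $\mm(\Rem_i(\De,\Omega,u,t))>0$.

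The one point that is not pure bookkeeping — hence the part I would treat most carefully — is the slight mismatch in hypotheses: Theorem \ref{th2} is stated with $\Delta u\in L^\infty$ (a two-sided bound), whereas here we assume only $\Delta u\le L$. I would dispose of this by observing that the proof of Theorem \ref{th2} \emph{never} uses a lower bound on $\Delta u$: the Laplacian is controlled only through the comparisons ${\bf \Delta}\varphi\llcorner_{\Rem_i}\ge-\Delta u\,\mm\llcorner_{\Rem_i}$ of Lemma \ref{lemma:comparison} and Lemma \ref{lemma:gt3} — which are only weakened, never jeopardized, when $\Delta u$ is very negative — and in the end only the quantity $\int\rho_\K(\Delta u)^+\,\d\mm\le\|(\Delta u)^+\|_{L^\infty(\overline\Omega)}$ is fed into Proposition \ref{lemmaEVI} and Step 4 of that proof. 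Alternatively, to stay within the letter of Theorem \ref{th2}, one approximates $u$ by functions $u_n$ with $\Delta u_n\in L^\infty$ and $(\Delta u_n)^+\le L$, applies Theorem \ref{th2} to each, and passes to the limit, checking that $\Rem_i(\De,\Omega,u_n,t)$ and the relevant norms converge appropriately.
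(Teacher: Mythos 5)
Your proposal is correct and matches the paper's own treatment: the paper offers no separate proof of this corollary, simply deducing it from Theorem \ref{th2} by feeding in $\|(\Delta u)^+\|_{L^\infty(\overline\Omega)}\le L$ and the monotonicity of the bound, exactly as in your steps (i)--(iii). Your extra discussion of the mismatch between the hypothesis $\Delta u\le L$ here and $\Delta u\in L^\infty$ in Theorem \ref{th2} is more careful than the paper, which passes over this point in silence.
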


\bigskip

On the basis of  our ABP estimate and the ideas of Cabr\'e \cite{Cabre-ABP} and Wang--Zhang \cite{WZ-ABP}, one can  also prove the Harnack inequality and certain geometric inequalities on metric measure spaces with suitable assumptions.  In this paper, we will  only  study an isoperimetric type inequality.

Let $\Omega \subset X$  be an open set. Recall that the \emph{upper Minkowski content}  is defined as
\[
\mm^+(\Omega):=\mathop{\limsup}_{\epsilon \downarrow 0} \frac{\mm(\Omega^\epsilon)-\mm(\Omega)}{\epsilon}
\]
where $\Omega^\epsilon \subset X$ is the $\epsilon$-neighbourhood of $\Omega$ defined as $\Omega^\epsilon:=\{x: \dist(x, \Omega)<\epsilon\}$.   In metric-measure setting, this notion plays the role of `boundary area' (cf. \cite{ADMG-P} for more discussions).

\begin{definition}[Uniform exterior sphere condition]
 Let $\Omega\subset  X$ be an open set. We say that $\Omega$ satisfies the \emph{uniform exterior
sphere condition}  if  there exists $r>0$ such that for all $x \in \partial\Omega$ there is $p_x \in \Omega^c$ such that $\dist(x, p_x)=r$ and $B_r(p_x)\subset \Omega^c$.
\end{definition}

\begin{definition}[$H$-mean convex]
 Let $\Omega$ be an open set in a  ${\rm CD}(0, N)$ space. Let $u$ be the signed distance function defined by
\begin{equation} 
u(x):=\left\{\begin{array}{ll}
\dist(x, \Omega), &\text{if}~~ x\in \Omega^c\\
-\dist(x, \Omega^c), &\text{if}~~x\in \Omega.
\end{array}
\right.
\end{equation} We say that $\partial \Omega$  is  $H$-mean convex if there is $\sigma >0$ so that  $u\in {\rm D}(\Delta,  \Omega_{\sigma}\setminus \overline{\Omega})$ and 
 ${\Delta} u\llcorner_{\Omega_{\sigma}\setminus \overline{\Omega}} \leq -H \mm\llcorner_{\Omega_{\sigma}\setminus \overline{\Omega}}$.

\end{definition}

 \brk
 By the representation formula for the Laplacian of the signed distance function,  proved by Cavalletti--Mondino \cite[Corollary 4.16]{CM-Laplacian} based on needle decomposition,  one can prove: if $\Omega$ satisfies uniform exterior sphere condition, and the outer mean curvature of $\partial \Omega$  is bounded from below by $H$ in the sense of Ketterer \cite{KettererHK},  then it is  $H$-mean convex.
 
 In particular, in the smooth setting, $\partial \Omega$ is $H$-mean convex  if and only if its (outer) mean curvature is bounded from below by $H$.
 \erk

 \medskip

Then we can prove a generalized Steiner-type formula.  We  refer the readers to \cite{KettererHK} for a   {H}eintze--{K}archer type inequality.

\begin{theorem}[Generalized Steiner-type formula]
Let $\ms$ be an ${\rm RCD}(0, N)$ metric measure space with $N\geq 2$.  Let $\Omega$ be a bounded open set satisfying uniform exterior sphere condition with $\mm(\partial \Omega)=0$.  Assume that $\partial \Omega$ is $H$-mean convex for some $H\geq 0$.  Then, as $\epsilon \to 0$,
\begin{equation*}
\mm(\Omega_{\epsilon}) \leq \mm(\Omega)+
\left(\epsilon-\frac 12 H\epsilon^2\right)\mm^+(\Omega)+o(\epsilon^2).
\end{equation*}

\end{theorem}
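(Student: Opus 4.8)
The plan is to apply the ABP estimate of Theorem~\ref{th2} — more precisely, the sharper inequality contained in its proof, where the weighted average of $\Delta u$ over the contact set is kept rather than being replaced by $\|(\Delta u)^+\|_{L^\infty}$ — to the \emph{signed distance function} $u$ on a thin collar lying \emph{outside} $\Omega$, and then to iterate the resulting one-parameter inequality dyadically. Let $u$ be the signed distance function to $\partial\Omega$ as in the definition of $H$-mean convexity (so $u>0$ outside $\Omega$, $u<0$ inside, and $u$ is $1$-Lipschitz), and write $C_\sigma:=\Omega_\sigma\setminus\overline\Omega=\{0<u<\sigma\}$. The hypothesis furnishes $\sigma>0$ with $u\in{\rm D}(\Delta,C_\sigma)$ and $\Delta u\leq -H$ on $C_\sigma$; the uniform exterior sphere condition (radius $r$) then upgrades this — comparing $u$ from below with the barriers $z\mapsto\dist(z,B_r(p_y)^c)$ and using the Laplacian comparison theorem in ${\rm RCD}(0,N)$ — to a two-sided bound $-C\leq\Delta u\leq -H$ on $C_\sigma$ after shrinking $\sigma<r$, so that $u$ is admissible in Theorem~\ref{th2} on $\Omega':=C_\sigma$ (bounded, open, and, shrinking $\sigma$ once more, with $\mm(\partial\Omega')=0$). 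Put $f(s):=\mm(\Omega_s)-\mm(\Omega)=\mm(\{0<u<s\})$ for $0\leq s<\sigma$, so $f$ is non-decreasing, $f(0^{+})=0$ and $\limsup_{s\downarrow 0}f(s)/s=\mm^+(\Omega)$; we may also take $\sigma$ small enough that $\sigma H/N<1$.

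The crux is the inequality
\[
f(t+h)-f(t)\ \leq\ f(h)\,\Bigl(1-\tfrac{tH}{N}\Bigr)^{N}\qquad\text{for all }t,h>0,\ t+h<\sigma,
\]
obtained by applying the estimate with vertex set $\De:=\{t\leq u\leq t+h\}$ (compact) and opening $t$. Its proof rests on an explicit description of the $1$-contact set: since $u$ is $1$-Lipschitz, $u(z)+\dist(z,y)\geq u(y)$ for every $z$, with equality exactly along a minimizing geodesic from $y\in\De$ to its nearest point on $\partial\Omega$ (which exists since $X$ is proper); that geodesic has length $u(y)\geq t$, so it contains a point $x$ with $\dist(x,y)=t$ realizing $\inf_{\overline{\Omega'}}(u+\dist_y)$, and every contact point $x$ satisfies $u(x)=u(y)-t\in[0,h]$. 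Hence $\Rem_1(\De,\Omega',u,t)\subseteq\{0\leq u\leq h\}$ and the surjectivity hypothesis of Theorem~\ref{th2} holds for $\Rem_1$; the strict containment $\Rem_1\subset\Omega'$ is arranged by first replacing $\De$ by $\{t+\delta\leq u\leq t+h\}$ and then letting $\delta\downarrow 0$ — the attendant measure bookkeeping being absorbed by working with $s\mapsto\mm(\{0<u\leq s\})$, which the inequality itself forces to be continuous and to vanish as $s\downarrow 0$. Since $K=0$ and $\Delta u\leq -H$ on $\Omega'\supseteq\Rem_1$, the ABP argument bounds $\mm(\De)$ by $\mm(\Rem_1)(1-tH/N)^{N}$, and $\Rem_1\subseteq\{0\leq u\leq h\}$ bounds $\mm(\Rem_1)$ by $f(h)$, while $\mm(\De)\geq f(t+h)-f(t)$; combining yields the displayed inequality.

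To finish I would iterate. Specializing $h=t$ gives $f(2t)\leq f(t)\bigl(1+(1-tH/N)^{N}\bigr)$, i.e.\ for $g(s):=f(s)/s$, $g(s)\leq g(s/2)\cdot\tfrac12\bigl(1+(1-\tfrac{sH}{2N})^{N}\bigr)=g(s/2)\bigl(1-\tfrac{sH}{4}+O(s^{2})\bigr)$. Iterating from $\epsilon/2^{m}$ up to $\epsilon$ (with $\epsilon<\sigma$), the accumulated error $\sum_{k\geq 0}O\bigl((\epsilon 2^{-k})^{2}\bigr)=O(\epsilon^{2})$ being uniform in $m$, gives $g(\epsilon)\leq g(\epsilon/2^{m})\exp\!\bigl(-\tfrac{\epsilon H}{2}(1-2^{-m})+O(\epsilon^{2})\bigr)$; letting $m\to\infty$ and using $\liminf_{m}g(\epsilon/2^{m})\leq\limsup_{s\downarrow 0}g(s)=\mm^+(\Omega)$ yields $g(\epsilon)\leq\mm^+(\Omega)\,e^{-\epsilon H/2+O(\epsilon^{2})}$, hence $\mm(\Omega_\epsilon)-\mm(\Omega)=\epsilon\,g(\epsilon)\leq\bigl(\epsilon-\tfrac12 H\epsilon^{2}\bigr)\mm^+(\Omega)+O(\epsilon^{3})$, which is the assertion since $O(\epsilon^{3})=o(\epsilon^{2})$.

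The main obstacle, I expect, is the contact-set step. One must revisit the proof of Theorem~\ref{th2} and observe that, for $K=0$, it actually delivers $\mm(\De)\leq\mm(\Rem_i)(1-tH/N)^{N}$ whenever $\Delta u\leq -H$ on $\Rem_i$ and $tH/N<1$ — this exploits the \emph{sign} of $\Delta u$ (the genuine contraction $(1-tH/N)^{N}<1$), which the $(\Delta u)^{+}$-formulation stated in the theorem cannot see. One must also verify that the distance function $u$ is admissible in Theorem~\ref{th2} on the collar and that its $1$-contact set is as described, and it is precisely here that the uniform exterior sphere condition is used — to secure $\Delta u\in L^\infty(C_\sigma)$ and, more broadly, the semiconcavity of $u$ near $\partial\Omega$. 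The set-up and the iteration itself are routine.
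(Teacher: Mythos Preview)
Your proposal is correct and follows essentially the same route as the paper: apply the $1$-contact ABP estimate to the signed distance function on a thin exterior collar, obtain the recursive inequality $f(2t)-f(t)\leq f(t)\bigl(1-ctH\bigr)^{c'}$ (the paper writes it with exponent $N-1$ rather than $N$, which is immaterial at second order), iterate dyadically, and pass to the limit using the definition of $\mm^+(\Omega)$. Your treatment is in fact more scrupulous than the paper's on two points---you explicitly flag that the stated Theorem~\ref{th2}/Corollary~\ref{coro:mainperturbJensen} involves $(\Delta u)^+$ (or positive $L$) and that one must revisit its proof to extract the genuine contraction $(1-tH/N)^N$ from $\Delta u\leq -H$, and you justify $\Delta u\in L^\infty$ on the collar via the exterior sphere barriers---whereas the paper invokes Corollary~\ref{coro:mainperturbJensen} directly.
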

\begin{proof}
For $\epsilon \in (0, r/2)$ and $\delta\in  (0, \epsilon/2)$,  consider the contact set $\Rem_1(\overline{\Omega_{2\epsilon-\delta}}\setminus {\Omega_{\epsilon+\delta}},   \Omega_\epsilon\setminus \overline{\Omega}, u,\epsilon)$.
By uniform exterior sphere condition, we can see that $$\Rem_1(\overline{\Omega_{2\epsilon-\delta}}\setminus {\Omega_{\epsilon+\delta}},   \Omega_\epsilon\setminus \overline{\Omega}, u,\epsilon) \subset \Omega_\epsilon\setminus \overline{\Omega}$$
and
\[
\forall y\in \overline{\Omega_{2\epsilon-\delta}}\setminus {\Omega_{\epsilon+\delta}},  ~\exists x\in  \Rem_1, ~\dist(x, y)=\epsilon, ~\inf_{ \Omega_\epsilon\setminus \overline{\Omega}}\big(u+\dist_y\big)=u(x)+\dist(x, y).
\]

Applying Corollary \ref{coro:mainperturbJensen} to $u$,  we obtain
\begin{equation*}
\mm(\overline{\Omega_{2\epsilon-\delta}}\setminus {\Omega_{\epsilon+\delta}}) \leq \mm( \Omega_\epsilon \setminus \overline\Omega)\Big(1-\frac{\epsilon H}{N-1} \Big)^{N-1}.
\end{equation*}
Letting $\delta\to 0$ we get
\begin{equation*}
\mm({\Omega_{2\epsilon}}\setminus {\overline{\Omega_{\epsilon}}}) \leq \mm( \Omega_\epsilon \setminus \overline\Omega)\Big(1-\frac{\epsilon H}{N-1} \Big)^{N-1}
\end{equation*}
By Taylor's expansion  with respect to $\epsilon$,  we know
\[
\Big(1-\frac{\epsilon H}{N-1} \Big)^{N-1} \leq 1- H \epsilon+\frac {1}{2} H^2\epsilon^2.
\]
So
\begin{eqnarray*}
\mm(\Omega_{2\epsilon}\setminus \overline\Omega) &\leq& \left(2- H \epsilon+\frac {1}{2} H^2\epsilon^2\right)\mm(\Omega_{\epsilon}\setminus \overline\Omega)\\
&\leq& \left(2- H \epsilon+\frac {1}{2} H^2\epsilon^2\right)\left(2- \frac 12 H \epsilon+\frac {1}{2^3} H^2\epsilon^2\right)\mm(\Omega_{\epsilon/2}\setminus \overline\Omega)\\
&...& \\ 
&\leq& 2^n\left(1-H\epsilon+\frac 1{2^n}H\epsilon+O(\epsilon^2)\right)\mm(\Omega_{\epsilon/2^{n-1}}\setminus \overline\Omega).
\end{eqnarray*}
Letting $n\to \infty$ we get
\[
\mm(\Omega_{2\epsilon}\setminus \overline\Omega) \leq 
(2\epsilon-2H\epsilon^2)\mm^+(\Omega)+o(\epsilon^2)
\]
Replacing $\epsilon$ by $\epsilon/2$ we prove the theorem.
\end{proof}

\def\cprime{$'$}
\providecommand{\bysame}{\leavevmode\hbox to3em{\hrulefill}\thinspace}
\providecommand{\MR}{\relax\ifhmode\unskip\space\fi MR }
\providecommand{\MRhref}[2]{%
  \href{http://www.ams.org/mathscinet-getitem?mr=#1}{#2}
}
\providecommand{\href}[2]{#2}

\end{document}